\documentclass[12pt]{article}
\usepackage[utf8]{inputenc}
\usepackage{amsmath,amssymb,amsthm}
\usepackage[margin=1in]{geometry}
\usepackage[hidelinks]{hyperref}

\newenvironment{romenum}
  {\begin{enumerate}}
  {\end{enumerate}}

\theoremstyle{plain}
\newtheorem{theorem}{Theorem}
\newtheorem{lemma}[theorem]{Lemma}
\newtheorem{proposition}[theorem]{Proposition}

\theoremstyle{definition}
\newtheorem{definition}[theorem]{Definition}
\theoremstyle{remark}
\newtheorem{remark}[theorem]{Remark}

\newenvironment{salign}{
    \begin{equation}
    \begin{aligned}
}{
    \end{aligned}
    \end{equation}
    \ignorespacesafterend
}

\newcommand{\defeq}{\stackrel{\textnormal{def}}{=}}

\newcommand{\Z}{\mathbb{Z}}
\newcommand{\Q}{\mathbb{Q}}

\newcommand{\F}{\mathbb{F}}
\newcommand{\Norm}{\mathrm{N}}
\newcommand{\OO}{\mathcal{O}}

\newcommand{\Invisible}{\mathrm{Invisible}}

\title{Lattice Point Visibility along Powers of Quadratic Polynomials}

\author{
  Abraham Lobsenz \\
  \and
  Tristan Phillips \\
}

\date{}
\hypersetup{
  pdftitle={Lattice Point Visibility along Powers of Quadratic Polynomials},
  pdfauthor={Abraham Lobsenz and Tristan Phillips}
}

\begin{document}
\maketitle

\begin{abstract}
We study the growth of the number of invisible lattice points along powers of quadratic polynomials. Let
$f(x)=Ax^2+Bx+C\in\Z[x]$ have positive leading coefficient and nonzero
discriminant, and let $F(x)=f(x)^m$ with $m\geq 2$. For $m\geq 3$ we prove that the number
of invisible lattice points in $[1,N]^2$ has order $N\log N$, and when $m=2$ the number of invisible lattice points satisfies
$N\log N\ll_F\#\mathrm{Invisible}_F(N)\ll_F N(\log N)^4$. These estimates refine a previous result of the authors. 
\end{abstract}

\section{Introduction}\label{sec:intro}

Lattice point visibility is a classical problem in number theory.
A point $(a,b)$ in the integer lattice $\Z\times \Z$ is said to be \emph{visible from the origin} if the line segment between the points $(0,0)$ and $(a,b)$ contains no additional lattice points. The lattice point visibility problem asks: What proportion of lattice points are visible from the origin?

Since $(a,b)$ being visible from the origin is equivalent to $\gcd(a,b)=1$, the lattice point visibility problem is equivalent to asking for the probability that two randomly chosen integers are coprime. This reformulation relates the problem to the Basel problem, posed by Mengoli in 1650, which asks for the precise value of the sum
\begin{equation}
    \zeta(2) = \sum_{n=1}^\infty \frac{1}{n^2}.
\end{equation}
Euler's celebrated 1734 solution of this problem showed that
\begin{equation}
    \zeta(2) = \prod_{p \text{ prime}} \left( 1 - \frac{1}{p^2}\right)^{-1} =\frac{\pi^2}{6}.
\end{equation}
Since the probability that two integers are not both divisible by a given prime $p$ is $1-1/p^2$, this shows that the probability that two integers are coprime is $6/\pi^2$, and hence the density of visible lattice points is also $6/\pi^2$. Subsequent work justifying and extending Euler's argument is due to Weierstrass, Mertens, Dirichlet, Ces\`{a}ro, and Sylvester.
For more modern treatments of the classical lattice point visibility problem, see \cite[Theorem 332]{HardyWright} and \cite[\S 3.2]{Apostol}.

In our earlier paper \cite{LobsenzPhillipsPila}, we studied lattice point visibility along polynomial lines of sight; we recall
the definition of visibility and invisibility.

\begin{definition}[{\cite[Definition~2.1]{LobsenzPhillipsPila}}]\label{def:visibility}
Let $F(x)\in\Z[x]$ be a polynomial with positive leading coefficient. A lattice point
$(a,h)\in\Z_{>0}\times\Z_{>0}$ is \emph{visible along $F(x)$} if there
exists a rational number $t\in\Q$ such that $h=tF(a)$ and $a$ is the
smallest positive integer $u$ such that $tF(u)$ is a positive integer.
A lattice point is \emph{invisible along $F(x)$} if it is not visible.
\end{definition}

For $F$ satisfying $F(0)=0$, Definition~\ref{def:visibility} recovers the polynomial visibility studied
by Goins--Harris--Kubik--Mbirika~\cite{GoinsHarrisKubikMbirika} and
Chaubey--Pandey~\cite{ChaubeyPandey}, who restricted to polynomials passing
through the origin.

For $N\in\Z_{>0}$ define $[1,N]\defeq\{1,\dots,N\}$. Following \cite[(12)--(13)]{LobsenzPhillipsPila}, define the sets of
visible and invisible lattice points in $[1,N]^2$:
\begin{equation}\label{eq:def-visible-invisible}
\begin{aligned}
    \mathrm{Visible}_F(N)
        &\defeq \{(a,h)\in[1,N]^2:(a,h)\text{ is visible along }F\},\\
    \mathrm{Invisible}_F(N)
        &\defeq \{(a,h)\in[1,N]^2:(a,h)\text{ is invisible along }F\}.
\end{aligned}
\end{equation}
Write $\#\mathrm{Visible}_F(N)$ and $\#\mathrm{Invisible}_F(N)$ for their
sizes; since these sets partition $[1,N]^2$,
\begin{equation}\label{eq:partition}
    \#\mathrm{Visible}_F(N)+\#\mathrm{Invisible}_F(N)=N^2.
\end{equation}
The \emph{visibility density} of $F$ is
\begin{equation}\label{eq:DF}
    D(F)\defeq \lim_{N\to\infty}\frac{\#\mathrm{Visible}_F(N)}{N^2}
            = 1-\lim_{N\to\infty}\frac{\#\mathrm{Invisible}_F(N)}{N^2},
\end{equation}
whenever this limit exists.

For the classical problem one has $D(x)=1/\zeta(2)=6/\pi^2$, and
\cite{GoinsHarrisKubikMbirika} extends this to monomial lines of sight, showing $D(\alpha x^b)=1/\zeta(1+b)$. The behavior changes dramatically as
soon as $F$ has more than one distinct root. In
\cite[Conjecture~1.1]{LobsenzPhillipsPila} we conjectured that $D(F)=1$
for every $F\in\Z[x]$ with at least two distinct roots, extending a
conjecture of Chaubey and Pandey \cite[Conjecture~1.6]{ChaubeyPandey}.
The conjecture of Chaubey and Pandey concerned the case in which $F(0)=0$, and was resolved by Chaubey, Pandey, and Regavim \cite{ChaubeyPandeyRegavim}.
More precisely, in the case $F(0)=0$ they show that for any $\varepsilon>0$,
\begin{equation}
    \#\Invisible_F(N)\ll_{F,\varepsilon}
    \begin{cases}
        N^{2-1/(2\deg(F)-1)+\varepsilon} &\text{ if $F$ is separable,}\\
        N^{2-1/(\deg(F)^2-1)+\varepsilon} &\text{ in general.}
    \end{cases}
\end{equation}

A result of the authors, \cite[Theorem~1.2]{LobsenzPhillipsPila}, verifies that $D(F)=1$ for
$F(x)=f(x)^m$ whenever $f\in\Z[x]$ has degree at least $2$, positive leading coefficient, and at least two distinct roots over $\overline{\Q}$,
and $m\geq 2$. More precisely, in \cite[Theorem~1.3]{LobsenzPhillipsPila} it is shown that there exists an integer $2\leq \delta_F\leq \deg(F)$ depending on $F$, such that for any $\varepsilon>0$,
\begin{equation}\label{eq:pila-bound}
    \#\mathrm{Invisible}_F(N)
    \ll_{F,\varepsilon}N^{1+1/\delta_F+\varepsilon}.
\end{equation}
This bound was obtained by applying Pila's bound for integral points on algebraic curves \cite{Pila} to a certain family of auxiliary curves.

The purpose of this paper is to sharpen the upper bounds for
$\#\mathrm{Invisible}_F(N)$ when $f$ is quadratic. We also establish
complementary lower bounds. In this case the auxiliary curves are conics, and
counting their integer points reduces to counting solutions to a generalized
Pell equation.

Throughout the remainder of the paper, let $f(x)\in\Z[x]$ denote the
quadratic polynomial
\begin{equation}\label{eq:f-quadratic}
    f(x)\defeq Ax^2+Bx+C,
\end{equation}
satisfying $A>0$ and discriminant
\begin{equation}\label{eq:Delta}
    \Delta\defeq B^2-4AC\neq 0.
\end{equation}
For a fixed integer $m\geq 2$, let $F$ denote the $m$th power of $f$,
\begin{equation}\label{eq:F}
    F(x)\defeq f(x)^m.
\end{equation}

Our main result sharpens the bound \eqref{eq:pila-bound} in this setting.

\begin{theorem}\label{thm:visibility}
Let $F$ be as in equation \eqref{eq:F}.
\begin{romenum}
\item If $m\geq 3$, then
\begin{equation}
    \#\mathrm{Invisible}_F(N)\asymp_F N\log N.
\end{equation}
\item If $m=2$, then
\begin{equation}
    N\log N\ll_F \#\mathrm{Invisible}_F(N)\ll_F N(\log N)^4.
\end{equation}
\end{romenum}
\end{theorem}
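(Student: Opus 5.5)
The plan is to first rewrite invisibility as a divisibility condition, then reduce both bounds to counting integer points on a family of conics of generalized Pell type.

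\textbf{Reformulation.} Let $(a,h)\in[1,N]^2$ and put $t=h/F(a)=p/q$ in lowest terms, so $q=F(a)/\gcd(h,F(a))$. Then $(a,h)$ is invisible along $F$ exactly when some $0<u<a$ satisfies $q\mid F(u)$. For such a pair $(a,u)$, write
\[
f(a)/f(u)=r/s\quad\text{with }\gcd(r,s)=1,
\]
after disposing of the $O_F(1)$ values of $u$ with $f(u)\le 0$ or $f(u)=f(a)$. I expect the condition $q\mid f(u)^m$ to become, up to a bounded factor depending only on $F$ (coming from $A$ and $\Delta$), the condition $r^m\mid h$. Moreover $(a,u)$ is an integer point on the conic
\[
s\,f(a)=r\,f(u).
\]
Completing the square with $X=2Aa+B$ and $Y=2Au+B$ turns this into
\[
sX^2-rY^2=\Delta(s-r),
\]
which is a generalized Pell equation once $rs$ is not a square.

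\textbf{Lower bound.} This gives $N\log N$ in both parts (i) and (ii). I will take $u_0$ large, set $a_0=u_0+f(u_0)$ so that $f(u_0)\mid f(a_0)$, and put $k=f(a_0)/f(u_0)$. I choose $u_0$ so that $k$ is not a perfect square, which should hold for all but finitely many $u_0$. The conic $X^2-kY^2=\Delta(1-k)$ then has the point $(X_0,Y_0)$ coming from $(a_0,u_0)$. Acting by a suitable power of the fundamental unit of $\Z[\sqrt k]$ preserves the congruence conditions mod $2A$ and the ordering $u<a$. This yields $\gg\log N$ pairs $(a,u)$ with $a\le N$ and $f(a)=kf(u)$. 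For each such pair, every $h\le N$ divisible by $k^m$ gives $q\mid f(u)^m$, hence an invisible point. So each of the $\gg\log N$ values of $a$ contributes $\gg N/k^m$ points, and since $k$ is fixed this totals $\gg_F N\log N$.

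\textbf{Upper bound.} Here I sum over the pairs $(r,s)$. Since $u<a$ we have $s<r$ (up to $O(1)$ exceptions), and $h$ ranges over multiples of $r^m$, giving $\le N/r^m$ choices. The number of $a\le N$ is bounded by the number $S(r,s;N)$ of solutions of the conic with $0<u<a\le N$. Hence
\[
\#\mathrm{Invisible}_F(N)\ll N+\sum_{r\ll N}\ \sum_{s<r}\frac{N}{r^m}\,S(r,s;N).
\]
The two regimes are as follows.
\begin{itemize}
\item When $rs$ is a square, the conic factors and $S=O(1)$, or else it contributes trivially.
\item Otherwise the solutions form finitely many orbits under the unit group. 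Each orbit contributes $O(\log N)$ points in the box, and the number of orbits should be $\ll \tau(rs\Delta)^{O(1)}$ uniformly, by counting ideals of norm dividing $\Delta(s-r)$ in the relevant order.
\end{itemize}
This gives $S(r,s;N)\ll \tau(rs\Delta)^{c}\log N$. For $m\ge3$, the sum $\sum_r r^{1-m}\sum_{s<r}\tau(rs)^c/r$ converges, giving $\ll N\log N$, which matches the lower bound. For $m=2$ the sum over $r\le N$ is essentially harmonic weighted by divisor functions. Averaging $\tau$-powers over $s<r$ and $r\le N$ costs further powers of $\log N$, and the aim is to show that exactly three extra logarithms appear, giving $N(\log N)^4$.

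\textbf{Main obstacle.} The delicate step is the uniform bound on $S(r,s;N)$. It needs two inputs:
\begin{itemize}
\item a count of orbits of solutions to $sX^2-rY^2=\Delta(s-r)$ with explicit divisor-function dependence on $r,s$;
\item a proof that each orbit meets the box $[1,N]$ in $O(\log N)$ points, uniformly, even though the fundamental unit of $\Q(\sqrt{rs})$ may be small relative to $r,s$.
\end{itemize}
I would first try to bound orbit representatives via reduction theory of binary quadratic forms, with representatives of size $\ll\sqrt{rs|\Delta|}\,\varepsilon$. The unit contributes at least a fixed factor $>1$, so each orbit contributes $\ll\log N$. The bookkeeping of the $\gcd$ and bounded factors in reducing $q\mid F(u)$ to $r^m\mid h$ also needs care, but I expect it to be routine.
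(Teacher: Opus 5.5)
\textbf{Gap: the uniform point count, and with it the $m=2$ upper bound, is not established.}

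Your lower bound is fine and is the paper's argument: a seed with $r_0=1$, propagated by a norm-one unit $\equiv 1\pmod{2A}$, then blocking the heights divisible by $k^m$. Only the seed differs. You vary $u_0$ rather than the multiplier, and this works because $k(u_0)=A^2u_0^2+A(B+2)u_0+AC+B+1$ has square leading coefficient and discriminant $A^2\Delta\neq 0$, so it is a square for only finitely many $u_0$. You should say this rather than ``should hold''. Also use a norm-one power of the unit so the sign of the right-hand side is preserved.

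Everything in the upper bound hinges on a uniform bound for $S(r,s;N)$. You only expect this bound, and you state its shape as $\tau(rs\Delta)^{c}\log N$ with $c$ unspecified. This has two problems.
\begin{enumerate}
\item It is not what the method you describe produces. Counting ideals or representations of the value $\Delta(s-r)$ gives a dependence on $\tau(|\Delta(r-s)|)$. Every odd prime dividing $r-s$ splits in $\Q(\sqrt{rs})$, since $rs\equiv s^2$ modulo it, so the ideal count really does grow with $\tau(r-s)$.
\item For $m=2$ the exponent feeds straight into the power of $\log N$, because $\sum_{n\le x}\tau(n)^c\asymp x(\log x)^{2^c-1}$. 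Nothing weaker than $c=1$ yields $(\log N)^4$. Your sentence ``the aim is to show that exactly three extra logarithms appear'' restates the claim; it is not an argument.
\end{enumerate}
For $m\ge 3$ any bound $\ll_\varepsilon r^{\varepsilon}\log N$ suffices, so part~(i) follows once the uniform count is actually proved. Part~(ii) does not follow from what you have written.

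\textbf{How the paper supplies the missing pieces} (in your notation, where your $(r,s)$ is the paper's $(s,r)$).
\begin{enumerate}
\item Multiply by $s$ to get $X'^2-DY^2=n$ with $D=rs$ and $|n|=s(r-s)|\Delta|$. Write $D=\ell^2D_0$ with $D_0$ squarefree and put $Z=\ell Y$. This injects the solutions into those of $X'^2-D_0Z^2=n$ with $|Z|\le \ell T$.
\item Work in $\Z[\sqrt{D_0}]$, which has index at most $2$ in $\OO_K$. Ideal counting there gives at most $5\tau(|n|)$ Pell families.
\item Each family contributes $O(\log(\ell T))$ points with an absolute constant. The proof takes a balanced representative, $\varepsilon^{-2}<|\alpha_0/\overline{\alpha_0}|\le 1$, and uses only that the fundamental norm-one unit satisfies $\varepsilon\ge 1+\sqrt2$ and $\varepsilon>\sqrt{D_0}$. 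So no reduction theory is needed, and your worry about small units disappears.
\item This gives $S(r,s;N)\ll_f \tau(s)\tau(r-s)\log N$. The three extra logarithms then come from $\sum_{s<r}\tau(s)\tau(r-s)\ll r\,\sigma_{-1}(r)(\log 2r)^2$ together with $\sum_{r\le X}\sigma_{-1}(r)(\log 2r)^2/r\ll(\log X)^3$.
\end{enumerate}

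Alternatively, your unmultiplied primitive form $sX^2-rY^2$ would also work. Its automorph-orbits of representations of $\Delta(s-r)$ inject into the roots of $\beta^2\equiv rs$ modulo the represented value. Since $\gcd(r-s,rs)=1$, this gives $\ll_f\tau(r-s)$ orbits, which is more than enough.
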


Theorem~\ref{thm:visibility} is deduced from bounds on a certain
GCD sum, which we now describe. Since $A>0$, the polynomial $f$ is
eventually positive and strictly increasing and $f'(x)=2Ax+B$ is eventually positive; as in
\cite[\S2]{LobsenzPhillipsPila}, we choose an integer $n_f\geq 1$ such
that whenever $a>b>n_f$,
\begin{equation}\label{eq:monotone}
    0<f(b)<f(a)\qquad\text{and}\qquad 2Ab+B>0.
\end{equation}
The reduction at the heart of \cite{LobsenzPhillipsPila} expresses the
invisibility count in terms of the sum
\begin{equation}\label{eq:S_F}
    S_F(N)\defeq
    \sum_{1\leq a\leq N}\sum_{n_f<b<a}
    \frac{\gcd(F(a),F(b))}{F(a)},
\end{equation}
which we will refer to as a \emph{GCD sum}.
By \cite[Proposition~2.4]{LobsenzPhillipsPila},
\begin{equation}\label{eq:invisible-S-intro}
    \#\mathrm{Invisible}_F(N)\leq N\,S_F(N)+O_F(N).
\end{equation}
Thus any estimate of the form $S_F(N)=o(N)$ forces
$\#\mathrm{Invisible}_F(N)=o(N^2)$, and hence $D(F)=1$ by \eqref{eq:DF}.
The following theorem gives such an estimate with explicit savings and,
via \eqref{eq:invisible-S-intro}, immediately implies the upper bounds.

\begin{theorem}\label{thm:upper}
Let $f$, $m$, and $F$ be as in Theorem~\ref{thm:visibility}.
\begin{romenum}
\item If $m\geq 3$, then
    \begin{equation}
        S_F(N)\ll_{F}\log N.
    \end{equation}
\item If $m=2$, then
    \begin{equation}
        S_F(N)\ll_F(\log N)^4.
    \end{equation}
\end{romenum}
\end{theorem}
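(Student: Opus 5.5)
Since $\gcd(x^m,y^m)=\gcd(x,y)^m$, I would first rewrite each summand as $(\gcd(f(a),f(b))/f(a))^m$. For $n_f<b<a$ put $g=\gcd(f(a),f(b))$, $u=f(a)/g$ and $v=f(b)/g$. Then $\gcd(u,v)=1$, $1\le v<u$ by \eqref{eq:monotone}, and $v f(a)=u f(b)$. Conversely, each pair $(a,b)$ determines a unique such $(u,v)$. Hence
\begin{equation*}
S_F(N)=\sum_{u\ge 2}\ \sum_{\substack{1\le v<u\\ \gcd(u,v)=1}} u^{-m}\, R_{u,v}(N),
\end{equation*}
where $R_{u,v}(N)$ counts pairs $n_f<b<a\le N$ with $vf(a)=uf(b)$. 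Only $u\le f(N)\ll N^2$ contribute. Completing the square via $4Af(x)=(2Ax+B)^2-\Delta$ and setting $X=2Aa+B$, $Y=2Ab+B$, the condition becomes the generalized Pell equation
\begin{equation*}
vX^2-uY^2=(v-u)\Delta,\qquad 0<Y<X\ll N .
\end{equation*}
The right side is nonzero, because $\Delta\ne0$ and $v\neq u$.

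The key step is a uniform bound for $R_{u,v}(N)$. I would split into two cases according to whether $uv$ is a square.

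If $uv$ is a square, coprimality gives $u=s^2$ and $v=t^2$. The equation then factors as $(tX-sY)(tX+sY)=(t^2-s^2)\Delta$. So $R_{u,v}(N)\le \tau\big((s^2-t^2)\Delta\big)\ll_\varepsilon u^{\varepsilon}$, with no $\log N$ at all.

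If $uv$ is not a square, multiply by $v$ to get $(vX)^2-uv\,Y^2=v(v-u)\Delta$. Solutions fall into finitely many orbits under the unit group of $\Z[\sqrt{uv}]$ of norm $1$. Within each orbit the solutions grow geometrically with ratio at least the fundamental unit $\varepsilon_{uv}\ge 1+\sqrt2$. Therefore each orbit contributes $\ll 1+\log N$ solutions with $X\ll N$. The number of orbits is at most the number of ideals of norm $|v(u-v)\Delta|$ in the (non-maximal) order, up to a conductor factor. I would bound this by a divisor function: $\ll_\Delta \tau(v)\,\tau(u-v)\cdot(\text{conductor correction})$. Alternatively, I would count the residues $\xi\bmod |v(u-v)\Delta|$ with $\xi^2\equiv uv$, which is $\ll 2^{\omega(v(u-v)\Delta)}$. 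Either way,
\begin{equation*}
R_{u,v}(N)\ll_F 2^{\omega(v)}\,2^{\omega(u-v)}\log N .
\end{equation*}

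Summing then finishes both parts. For $m\ge3$, the crude bound $2^{\omega(n)}\ll_\varepsilon n^{\varepsilon}$ gives
\begin{equation*}
S_F(N)\ll \log N\sum_u u^{-m}\cdot u^{1+2\varepsilon}\ll\log N,
\end{equation*}
since the series converges for $m\ge3$. For $m=2$ I would use the average bound instead:
\begin{equation*}
\sum_{v<u}2^{\omega(v)}2^{\omega(u-v)}\ll u(\log u)^2 .
\end{equation*}
This follows from Cauchy--Schwarz together with $\sum_{n\le x}4^{\omega(n)}\ll x(\log x)^3$, or more precisely from the standard shifted-convolution bound for divisor-type functions. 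This gives
\begin{equation*}
S_F(N)\ll \log N\sum_{u\ll N^2}\frac{(\log u)^2}{u}\ll(\log N)^4 .
\end{equation*}

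I expect the main obstacle to be making the orbit count rigorous and uniform in $(u,v)$. The difficulty is that $uv$ is not squarefree and the relevant order is non-maximal, so the correspondence between solution classes and square roots of $uv$ modulo $v(u-v)\Delta$ must be set up carefully (e.g.\ via the classical Lagrange/Nagell reduction to "fundamental solutions" with $|Y|\ll\sqrt{|k|}$-type bounds). I would aim for a bound of the shape $\#\{\text{classes}\}\ll_\Delta \#\{\xi \bmod 2|k| : \xi^2\equiv uv \bmod 4|k|\}$ with $k=v(u-v)\Delta$, and absorb the conductor into a divisor bound. A secondary point is handling the $\gcd(u,v)=1$ restriction and small $u$ in the average, which is routine.
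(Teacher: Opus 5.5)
There is a genuine gap. Your architecture matches the paper's: rearrange by the coprime pair $(u,v)=(s,r)$, pass to $(vX)^2-uv\,Y^2=v(v-u)\Delta$, factor in the square case, and in the nonsquare case bound by (number of unit orbits) times $O(\log N)$. The gap is the uniform orbit count, which you defer as the ``main obstacle''. That step is the heart of the proof, and neither mechanism you propose delivers it.

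Once you multiply by $v$, the discriminant $uv$ and the right-hand side $n=v(v-u)\Delta$ share the factor $v$. This breaks both routes.

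\begin{itemize}
\item The residue count is not divisor-bounded. If $p$ is an odd prime with $p^e\,\|\,v$ and $p\nmid(u-v)\Delta$, then $\xi^2\equiv uv\pmod{p^e}$ reads $\xi^2\equiv 0\pmod{p^e}$. This has $p^{\lfloor e/2\rfloor}$ solutions; already $v=p^2$ gives $p$ roots.
\item The ideal count in $\Z[\sqrt{uv}]$ fails for the same reason. Its conductor is divisible by every $p$ with $p^2\mid v$, and the ``conductor correction'' is unbounded. For $v=p^2$, every $\beta\in\Z[\sqrt u]$ of norm $(p^2-u)\Delta$ gives $p\beta\in\Z[p\sqrt u]=\Z[\sqrt{uv}]$ of norm $n$. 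Each $\Z[\sqrt u]^\times$-class of such $\beta$ then splits into $[\Z[\sqrt u]^\times:\Z[p\sqrt u]^\times]$ classes modulo $\Z[\sqrt{uv}]^\times$, and this index is unbounded as $p$ varies. Your per-orbit bound $O(\log N)$ uses only $\varepsilon_{uv}\geq 1+\sqrt2$, so it does not compensate.
\item Root counting only sees primitive solutions. Solutions with $g=\gcd(X,Y)>1$ (possible whenever $g^2\mid(u-v)\Delta$) must be added. They give $\tau$-type rather than $2^{\omega}$-type counts, so your bound $R_{u,v}(N)\ll 2^{\omega(v)}2^{\omega(u-v)}\log N$ is not established.
\end{itemize}

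The missing idea is the paper's passage to the squarefree kernel. Write $uv=\ell^2D_0$ with $D_0$ squarefree, and map $(vX,Y)\mapsto(vX,\ell Y)$. This injects your solutions into solutions of $x^2-D_0z^2=n$ with $|z|\leq \ell T$, where $T\ll_f N$. In $\Z[\sqrt{D_0}]$, which has index at most $2$ in $\OO_K$ with $K=\Q(\sqrt{D_0})$, Pell families inject into principal ideals of index $|n|$. There are at most $5\tau(|n|)$ of these, uniformly in $D_0$. The enlarged box costs only $O(\log(\ell T))=O(\log(uN))$ per family. This gives $R_{u,v}(N)\ll_f\tau(v)\tau(u-v)\log N$ for $u\leq f(N)$.

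A repair inside your own framework also works: do not multiply by $v$. Count roots for the primitive form $vX^2-uY^2$ of discriminant $4uv$, whose proper automorphs are exactly the norm-one units of $\Z[\sqrt{uv}]$ acting on $vX+Y\sqrt{uv}$. Since $\gcd(uv,(u-v)\Delta)=\gcd(uv,\Delta)$, for each $g$ with $g^2\mid(u-v)\Delta$ the congruence $\eta^2\equiv uv$ modulo $|(u-v)\Delta|/g^2$ has $\ll_\Delta 2^{\omega(|(u-v)\Delta|/g^2)}$ solutions. Summing over $g$ gives $\ll_\Delta\tau(u-v)$ orbits.

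Your $m=2$ averaging also needs repair. Cauchy--Schwarz with $\sum_{n\leq x}4^{\omega(n)}\ll x(\log x)^3$ only yields $\sum_{v<u}2^{\omega(v)}2^{\omega(u-v)}\ll u(\log u)^3$, hence $(\log N)^5$. Shifted-convolution bounds are also not uniformly $\ll u(\log u)^2$: for $\tau$, Ingham's asymptotic carries the unbounded factor $\sigma_{-1}(u)$. You must keep that factor, as in $\sum_{v<u}\tau(v)\tau(u-v)\ll u\,\sigma_{-1}(u)(\log 2u)^2$. Then average it over $u$ using $\sum_{u\leq X}\sigma_{-1}(u)(\log 2u)^2/u\ll(\log 2X)^3$.
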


Our lower bounds apply, more generally, to all powers $m\geq 1$.

\begin{theorem}\label{thm:lower}
Let $f$ be as in equation \eqref{eq:f-quadratic}, and let $F=f^m$ with $m\geq 1$.
Then
\begin{equation}
    S_F(N)\gg_F \log N
    \qquad
    \text{and}
    \qquad
    \#\mathrm{Invisible}_F(N)\gg_F N\log N.
\end{equation}
In particular, if $m\geq 3$, then
\begin{equation}
    S_F(N)\asymp_F \log N.
\end{equation}
\end{theorem}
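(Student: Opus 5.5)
The plan is to exhibit, for a suitable fixed integer $k>1$, infinitely many pairs $a>b>n_f$ with
\begin{equation*}
f(a)=k\,f(b),
\end{equation*}
whose $a$-values grow geometrically. Each such pair forces $\gcd(F(a),F(b))=F(b)=F(a)/k^m$, so its term in \eqref{eq:S_F} is exactly $k^{-m}$, a constant depending only on $F$. If the number of such $a\le N$ is $\gg\log N$, then $S_F(N)\gg_F\log N$.

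\textbf{Step 1: choosing $k$.} Fix an integer $b_0>n_f$ and put $a_0=b_0+f(b_0)$. Then $f(a_0)\equiv f(b_0)\pmod{f(b_0)}$, so $k\defeq f(a_0)/f(b_0)$ is an integer, and $k>1$ by \eqref{eq:monotone}. I need $k$ to be a non-square. I expect this to hold for a suitable $b_0$, since $k=A f(b_0)+2Ab_0+B+1$ is a quadratic polynomial in $b_0$ with leading coefficient $A^2$ and it takes non-square values for most $b_0$; otherwise I would perturb the construction (e.g.\ $a_0=b_0+r f(b_0)$ with $r\ge1$ varying). This is the step I expect to need the most care.

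\textbf{Step 2: reduction to a Pell equation.} Put $X=2Aa+B$ and $Y=2Ab+B$. Multiplying $f(a)=kf(b)$ by $4A$ and completing squares turns it into the generalized Pell equation
\begin{equation*}
X^2-kY^2=\Delta(1-k),
\end{equation*}
subject to the congruences $X\equiv Y\equiv B\pmod{2A}$. We have the solution $(X_0,Y_0)=(2Aa_0+B,\,2Ab_0+B)$. Let $\varepsilon$ be the fundamental unit of norm $1$ in $\Z[\sqrt k]$, and let $\eta=\varepsilon^{r}=u+v\sqrt k$, where $r$ is chosen so that $u\equiv1$ and $v\equiv0\pmod{2A}$. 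Such an $r$ exists because $\varepsilon$ has finite order in $(\Z[\sqrt k]/2A)^\times$. Then the elements $X_j+Y_j\sqrt k=(X_0+Y_0\sqrt k)\,\eta^{j}$ give solutions preserving both the norm and the congruences. Hence $a_j=(X_j-B)/2A$ and $b_j=(Y_j-B)/2A$ are integers with $f(a_j)=kf(b_j)$.

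\textbf{Step 3: counting.} Since $X_j,Y_j\asymp\eta^j$ with $X_j/Y_j\to\sqrt k>1$, for $j$ large we have $a_j>b_j>n_f$ and $a_j\asymp_F \eta^j$. The $a_j$ are distinct, and $\#\{j: a_j\le N\}\gg_F\log N$. Summing the terms $k^{-m}$ over these $a_j$ gives $S_F(N)\gg_F\log N$.

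\textbf{Step 4: invisible points.} For each such pair and each positive integer $j'\le N/k^m$, set $t=j'/F(b_j)$. Then $tF(a_j)=j'k^m\in[1,N]$ and $tF(b_j)=j'$ is a positive integer with $b_j<a_j$. By Definition~\ref{def:visibility}, the point $(a_j,j'k^m)$ is therefore invisible along $F$. The points with different $a_j$ are distinct, so
\begin{equation*}
\#\mathrm{Invisible}_F(N)\ \ge\ \sum_{a_j\le N}\left\lfloor \frac{N}{k^m}\right\rfloor\ \gg_F\ N\log N.
\end{equation*}

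Finally, for $m\ge3$ the matching upper bound $S_F(N)\ll_F\log N$ is Theorem~\ref{thm:upper}(i). Combined with the lower bound above, this gives $S_F(N)\asymp_F\log N$.
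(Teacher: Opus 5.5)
Your argument follows the paper's proof: a seed relation $f(a_0)=k\,f(b_0)$ (so $r_0=1$) with $k$ nonsquare, propagation along the Pell family of $X^2-kY^2=(1-k)\Delta$ by a power of the norm-one unit that is $\equiv 1 \pmod{2A}$, and then the blocked heights $(a_j,\,j'k^m)$. Steps 2--4 and the final combination with Theorem~\ref{thm:upper}(i) are correct as written. The only difference is in the seed. The paper fixes $b_0$ and varies the multiplier in $a=b_0+kf(b_0)$, which is essentially your fallback. That leads to $U_k^2-4Af(b_0)y^2=\Delta$, and it needs Lemmas~\ref{lem:rep-bound} and~\ref{lem:family-count} to show that only $O(\log T)$ multipliers give squares. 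You instead keep the multiplier equal to $1$ and vary $b_0$.

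Step 1, however, is only stated as an expectation, and the reason you give is not enough. A square leading coefficient $A^2$ is exactly the case in which $k(b_0)$ could be the square of a linear polynomial. This actually happens when $\Delta=0$: for $f(x)=x^2$ one gets $k=(b_0+1)^2$ for every $b_0$. The missing check uses $\Delta\neq 0$ and is short. Write $k(b_0)=A^2b_0^2+A(B+2)b_0+(AC+B+1)$ and set $U=2A^2b_0+A(B+2)$. Then $4A^2k(b_0)=U^2-A^2\Delta$, so $k(b_0)=y^2$ forces $(U-2Ay)(U+2Ay)=A^2\Delta$. Since $A^2\Delta\neq 0$, this has only finitely many solutions, and $U$ determines $b_0$. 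Hence all but finitely many $b_0>n_f$ give a nonsquare $k>1$. With this filled in, your route to the seed is slightly simpler than the paper's: it needs only a factorization count rather than the Pell-family lemmas.
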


\section*{Acknowledgements}
We would like to thank Sneha Chaubey, Ashish Kumar Pandey, and Shvo Regavim for sharing a preprint of their paper \cite{ChaubeyPandeyRegavim} with us.
We thank Dartmouth College for access to various AI tools, which we used in the preparation of this manuscript.
TP was supported by the National Science Foundation through grant DMS-2303011.

\section{Specializing the GCD sum to quadratic \texorpdfstring{$f$}{f}}\label{sec:gcd-sum}

The inequality \eqref{eq:invisible-S-intro} bounds
$\#\mathrm{Invisible}_F(N)$ in terms of the GCD sum $S_F(N)$ for any
$F\in\Z[x]$ with positive leading coefficient. In this section we
specialize to $F=f^m$, with $f$ the quadratic polynomial in
\eqref{eq:f-quadratic}, and rewrite $S_F(N)$ as a weighted count of
integer points on a family of conics. This sets up the Pell-equation
analysis carried out in \S\ref{sec:pell}.

For $a>b>n_f$, define
\begin{equation}\label{eq:g-s-r}
    g(a,b)\defeq \gcd(f(a),f(b)), \qquad
    s(a,b)\defeq \frac{f(a)}{g(a,b)}, \qquad
    r(a,b)\defeq \frac{f(b)}{g(a,b)}.
\end{equation}
Then $\gcd(s(a,b),r(a,b))=1$ and $s(a,b)>r(a,b)\geq 1$. The definitions in this section and the identity \eqref{eq:S-rearranged} are
valid for every $m\geq 1$. Moreover,
\begin{salign}
    \frac{\gcd(F(a),F(b))}{F(a)}
    &= \frac{\gcd(f(a)^m,f(b)^m)}{f(a)^m} \\
    &= \frac{g(a,b)^m}{f(a)^m} \\
    &= \frac{1}{s(a,b)^m}.
\end{salign}

For positive integers $s>r$, define
\begin{equation}\label{eq:Msr}
    M_{s,r}(N)\defeq
    \#\{(a,b)\in[n_f+1,N]^2 :
       b<a,\; s(a,b)=s,\; r(a,b)=r\}.
\end{equation}
Since $s(a,b)\leq f(a)\leq f(N)$ for $a\leq N$, we obtain
\begin{equation}\label{eq:S-rearranged}
    S_F(N)
    =
    \sum_{2\leq s\leq f(N)}\sum_{r=1}^{s-1}
    \frac{M_{s,r}(N)}{s^m}.
\end{equation}

The point of this rearrangement is that the pairs counted by
$M_{s,r}(N)$ lie on a fixed algebraic curve.

\begin{proposition}\label{prop:curve}
If $(a,b)$ is counted by $M_{s,r}(N)$, then
\begin{equation}
    s\,f(b)=r\,f(a).
\end{equation}
Equivalently, $(a,b)$ is an integer point on the curve
\begin{equation}\label{eq:curve}
    G_{s,r}(x,y)\defeq s\,f(y)-r\,f(x)=0.
\end{equation}
\end{proposition}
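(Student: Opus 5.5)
The claim follows directly from the definitions in \eqref{eq:g-s-r}, so the plan is to unwind them. Suppose $(a,b)$ is counted by $M_{s,r}(N)$. By \eqref{eq:Msr}, $a,b\in[n_f+1,N]$ and $b<a$, so $a>b>n_f$. By \eqref{eq:monotone}, $f(a)$ and $f(b)$ are then positive integers. Hence $g(a,b)=\gcd(f(a),f(b))$ is a positive integer, and the quotients $s(a,b)$ and $r(a,b)$ in \eqref{eq:g-s-r} are well-defined positive integers. The membership conditions give $s(a,b)=s$ and $r(a,b)=r$.

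Next, write $g=g(a,b)$. The definitions give $f(a)=s\,g$ and $f(b)=r\,g$. Multiplying the second identity by $s$ and the first by $r$ gives
\begin{equation*}
    s\,f(b)=s\,r\,g=r\,f(a).
\end{equation*}
This is the desired relation. Rearranging, $G_{s,r}(a,b)=s\,f(b)-r\,f(a)=0$, so $(a,b)\in\Z^2$ lies on the curve \eqref{eq:curve}. The two formulations are equivalent simply because \eqref{eq:curve} is defined as the difference of the two sides.

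There is no genuine obstacle. The only point needing care is that $g(a,b)\neq 0$, so that $s$ and $r$ are defined and the manipulation is legitimate. This is exactly what the choice of $n_f$ in \eqref{eq:monotone} guarantees: it makes $f(b)>0$ for every $b>n_f$.
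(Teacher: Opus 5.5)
Your proof is correct and takes the same approach as the paper: write $f(a)=s\,g(a,b)$ and $f(b)=r\,g(a,b)$, then cross-multiply to get $s\,f(b)=r\,f(a)$. Your extra check that $g(a,b)>0$ via \eqref{eq:monotone} is a small piece of care that the paper leaves implicit.
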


\begin{proof}
If $(a,b)$ is counted by $M_{s,r}(N)$, then by definition
\begin{equation}
    f(a)=s\,g(a,b),\qquad f(b)=r\,g(a,b).
\end{equation}
Multiplying the second equality by $s$ and the first by $r$ gives
$s\,f(b)=r\,f(a)$.
\end{proof}

\section{Reduction to a Pell equation and ideal counting}\label{sec:pell}

In this section we transform the equation $s\,f(b)=r\,f(a)$ from
Proposition~\ref{prop:curve} into a generalized Pell equation, and then
bound the number of integer solutions by combining a Pell-family count
with an ideal-counting argument. These bounds will be assembled in
\S\ref{sec:upper-proof} to prove Theorem~\ref{thm:upper}.

Throughout this section, $f(x)=Ax^2+Bx+C$ satisfies the hypotheses of
Theorem~\ref{thm:visibility}.

\subsection{Reduction to a generalized Pell equation}

Fix positive integers $s>r$. The curve equation $s\,f(b)=r\,f(a)$ is
\begin{equation}
    s(Ab^2+Bb+C)=r(Aa^2+Ba+C).
\end{equation}
Using
\begin{equation}
    4Af(x)=(2Ax+B)^2-\Delta,
\end{equation}
we obtain
\begin{equation}
    r(2Aa+B)^2-s(2Ab+B)^2=(r-s)\Delta.
\end{equation}
Define
\begin{equation}\label{eq:pell-change}
    X\defeq r(2Aa+B), \qquad
    Y\defeq 2Ab+B, \qquad
    D\defeq sr, \qquad
    n\defeq r(r-s)\Delta.
\end{equation}
Since $s>r\geq 1$ and $\Delta\neq 0$, we have $D\geq 2$ and $n\neq 0$.
Then
\begin{equation}\label{eq:pell-main}
    X^2-DY^2=n.
\end{equation}
For fixed $s$ and $r$, the map $(a,b)\mapsto(X,Y)$ is injective.
Moreover, if $(a,b)$ is counted by $M_{s,r}(N)$, then
\begin{equation}\label{eq:Y-bound}
    |Y|\leq 2AN+|B|\ll_f N.
\end{equation}
Thus $M_{s,r}(N)$ is bounded by the number of integer solutions of the generalized Pell equation
\eqref{eq:pell-main} satisfying the bound \eqref{eq:Y-bound}. 
 \subsection{Pell families}

For a nonsquare integer $D\geq 2$, we write $K\defeq\Q(\sqrt D)$ for the
associated real quadratic field, $\OO_K$ for its ring of integers,
$\Norm$ for the field norm $K\to\Q$, and $R\defeq\Z[\sqrt D]$ for the order
generated by $\sqrt D$.

\begin{definition}[Pell family]\label{def:pell-family}
Let $D\geq 2$ be a nonsquare integer and let $n\neq 0$. A
\emph{Pell family} of integer solutions to
\begin{equation}\label{eq:pell}
    X^2-DY^2=n
\end{equation}
is an equivalence class of solutions $(X,Y)\in\Z^2$ under the relation
\begin{equation}
    (X,Y)\sim (X',Y')
    \iff
    X'+Y'\sqrt D
    =
    (X+Y\sqrt D)\,u
\end{equation}
for some unit $u\in R^\times$.
We write $\mathcal{F}(D,n)$ for the number of Pell families.
\end{definition}

For $n\in \Z_{>0}$ a positive integer, let $\tau(n)$ denote the divisor function
\begin{equation}
    \tau(n)\defeq \sum_{\substack{d\in \Z_{>0} \\ d|n}} 1.
\end{equation}

\begin{lemma}[Representation bound]\label{lem:rep-bound}
Let $D\geq 2$ be squarefree. For every nonzero integer $n$,
\begin{equation}\label{eq:divisor_bound}
    \mathcal{F}(D,n)\leq 5\,\tau(|n|),
\end{equation}
where $\tau$ denotes the divisor function. In particular, for every
$\varepsilon>0$,
\begin{equation}\label{eq:divisor_bound_epsilon}
    \mathcal{F}(D,n)\ll_{\varepsilon}|n|^\varepsilon,
\end{equation}
uniformly in $D$.
\end{lemma}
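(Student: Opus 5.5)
We map each Pell family to a principal ideal of $\OO_K$ of norm $|n|$ and count such ideals. Let $K=\Q(\sqrt D)$ with $D$ squarefree, so $R=\Z[\sqrt D]$ has conductor $f\in\{1,2\}$ in $\OO_K$: $f=1$ if $D\equiv 2,3\pmod 4$, and $f=2$ if $D\equiv 1\pmod 4$. A solution $(X,Y)$ of \eqref{eq:pell} gives $\alpha=X+Y\sqrt D\in R$ with $\Norm(\alpha)=n$. We send its family to the principal ideal $\alpha\OO_K$, which has absolute norm $|n|$.

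The fibers of this map must be shown to be small. If $\alpha\OO_K=\beta\OO_K$ with $\alpha,\beta\in R$, then $\beta=\alpha\varepsilon$ for some $\varepsilon\in\OO_K^\times$. The family of $\beta$ is therefore determined by the class of $\varepsilon$ in $\OO_K^\times/R^\times$, so each fiber has size at most $[\OO_K^\times:R^\times]$. When $f=1$ this index is $1$. When $f=2$ the standard bound is
\begin{equation}
[\OO_K^\times:R^\times]\;\le\;\#(\OO_K/2\OO_K)^\times\big/\#(\Z/2\Z)^\times\;\le\;3 .
\end{equation}
Indeed, the classical fact is that $\varepsilon_0^3\in\Z[\sqrt D]$ for the fundamental unit $\varepsilon_0$ of $\OO_K$ when $D\equiv 1\pmod 4$, since $(\OO_K/2\OO_K)^\times$ has order $1$ or $3$. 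One also has to handle the unit $-1$ and the sign of the norm of units. Both are harmless, since $-1\in R^\times$, and a unit of norm $-1$ simply maps solutions of norm $n$ to those of norm $-n$; we count only those with norm exactly $n$. So every fiber has size at most $3$.

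It remains to count ideals. The number of integral ideals of $\OO_K$ of norm $|n|$ equals
\begin{equation}
\sum_{d\mid |n|}\chi_K(d)\;\le\;\tau(|n|),
\end{equation}
where $\chi_K$ is the Kronecker character of $K$. Alternatively, one can see this prime by prime: at a split prime $p$ with $p^e\,\|\,n$ there are $e+1$ ideals of norm $p^e$, and at an inert or ramified prime there is at most $1$. Combining this with the fiber bound gives
\begin{equation}
\mathcal{F}(D,n)\le 3\,\tau(|n|)\le 5\,\tau(|n|).
\end{equation}
The slack in the constant $5$ absorbs any sloppiness in the index computation. The $\varepsilon$-version \eqref{eq:divisor_bound_epsilon} then follows from the classical divisor bound $\tau(k)\ll_\varepsilon k^\varepsilon$, and it is uniform in $D$ because nothing above depends on $D$ beyond the constant.

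The main obstacle is the index bound $[\OO_K^\times:R^\times]\le 3$ in the case $D\equiv 1\pmod 4$. I would prove it via the exact sequence for the order of conductor $2$:
\begin{equation}
1\to R^\times\to\OO_K^\times\to(\OO_K/2\OO_K)^\times/(\Z/2\Z)^\times .
\end{equation}
Here $(\OO_K/2\OO_K)^\times$ is $\F_4^\times$ (order $3$) when $2$ is inert, and is $\F_2^\times\times\F_2^\times$ (trivial) when $2$ splits; since $D$ is odd, $2$ does not ramify. Hence the cokernel of $R^\times\to\OO_K^\times$ injects into a group of order at most $3$.
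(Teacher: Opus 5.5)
Your proof is correct, and it organizes the count differently from the paper.

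\textbf{The paper's route.} It never leaves the order $R$. It injects Pell families into principal ideals $\alpha R$ of index $|n|$; the fibers are trivial because generators of the same $R$-ideal differ by an $R$-unit. It then bounds the number of \emph{all} $R$-ideals of index $m$ by comparing each $I$ with $J=I\OO_K$ through the sandwich $2J\subset I\subset J$. This gives $a_R(m)\le a_K(2m)+3a_K(m)\le 5\tau(m)$.

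\textbf{Your route.} You push forward to principal $\OO_K$-ideals, so the ideal count is simply $a_K(|n|)\le\tau(|n|)$. All the non-maximality of $R$ is absorbed into the fiber bound $[\OO_K^\times:R^\times]$. You control this index through the exact sequence $1\to R^\times\to\OO_K^\times\to(\OO_K/2\OO_K)^\times/(\Z/2\Z)^\times$. Its kernel is exactly $R^\times$, because $R=\Z+2\OO_K$ and $R$ is stable under conjugation, so $R\cap\OO_K^\times=R^\times$.

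Your fiber argument is sound. Families lying over a common ideal $\alpha\OO_K$ correspond injectively to cosets in $\OO_K^\times/R^\times$: if $\alpha\varepsilon$ and $\alpha\varepsilon u$ both lie in $R$ with $u\in R^\times$, they are in the same family by definition.

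\textbf{Comparison.} Your approach gives the sharper constant $3$ and isolates the genuine unit-index phenomenon. For example, when $D=5$, $\varepsilon_0=(1+\sqrt5)/2$ is not in $R$ but $\varepsilon_0^3=2+\sqrt5$ is. The cost is invoking the unit-index exact sequence. The paper's argument is a self-contained lattice computation that needs no information about units at all, at the price of a slightly worse constant.

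Two minor points. Your remark that ``the slack in the constant $5$ absorbs any sloppiness'' is unnecessary, since your index computation is exact. Also, naming the conductor $f$ clashes with the paper's quadratic polynomial $f$.
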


\begin{proof}
A solution $(X,Y)$ to the generalized Pell equation \eqref{eq:pell} gives an element
\begin{equation}
    \alpha=X+Y\sqrt D\in R
\end{equation}
with $\Norm(\alpha)=n$. 
Solutions in the same Pell family as $(X,Y)$ correspond to elements of the form $\alpha u\in R$ with $u\in R^\times$ a unit.
Note that the elements $\alpha$ and $\alpha u$ generate the same principal ideal, and conversely any two generators of the same principal ideal differ by a unit. 
Hence distinct Pell families correspond to distinct principal ideals $\alpha R$ with index $[R:\alpha R]=|n|$. Then, letting
\begin{equation}
    a_R(m)\defeq \#\{\alpha R\subset R : [R : \alpha R] = m\}
\end{equation}
denote the number of principal ideals of $R$ with norm $m$, 
we have
\begin{equation}\label{eq:F-ideal}
    \mathcal{F}(D,n)\leq a_R(|n|).
\end{equation}
Let 
\begin{equation}
    a_K(m)\defeq
    \# \{J\subseteq \OO_K : J \text{ is a nonzero ideal and }[\OO_K:J]=m\}
\end{equation}
denote the number of integral $\OO_K$-ideals of norm $m$.
Since ideals in the Dedekind domain $\OO_K$ factor uniquely into prime ideals,
the number of ideals of norm $p^k$ is at most $k+1$ for every rational prime
$p$ and every $k\geq 0$; this maximum occurs when $p$ splits. Multiplicativity
therefore gives
\begin{equation}\label{eq:aK}
    a_K(m)\leq \tau(m)
\end{equation}
(see, e.g., \cite[Ch.~I, \S3]{Neukirch}).

If $D\equiv 2,3\pmod 4$, then $R=\OO_K$, so
\begin{equation}\label{eq:aR_bound_tau}
    a_R(m)\leq a_K(m)\leq \tau(m).
\end{equation}

If $D\equiv 1\pmod 4$, then
\begin{equation}
    R\subset \OO_K=\Z\!\left[\frac{1+\sqrt D}{2}\right],
    \qquad [\OO_K:R]=2,
\end{equation}
and $2\OO_K\subset R$. For an $R$-ideal $I$ of index $m$, set
$J=I\OO_K$. Then
\begin{equation}
    2J\subset I\subset J.
\end{equation}
Moreover
\begin{equation}
    2m=[\OO_K:R][R:I]=[\OO_K:I]=[\OO_K:J]\,[J:I].
\end{equation}
Since $J/I$ is a quotient of the two-dimensional $\F_2$-vector space
$J/2J$, we have 
\begin{equation}
    [J:I]\in\{1,2,4\}.
\end{equation} 
The case $[J:I]=4$ is not possible, as this would force $I=2J$, contradicting $I\OO_K=J$. Hence 
\begin{equation}
    [\OO_K: J]\in\{2m,m\}.
\end{equation}
For fixed $J$, if $[\OO_K: J]=2m$ then $[J:I]=1$ so that $I=J$; and if $[\OO_K:J]=m$ then $[J:I]=2$ so that $I$ is one of the three one-dimensional subspaces of $J/2J$.
Therefore
\begin{equation}\label{eq:aR_bound_5tau}
    a_R(m)\leq a_K(2m)+3a_K(m)
    \leq \tau(2m)+3\tau(m)
    \leq 5\tau(m).
\end{equation}
Combining the bounds \eqref{eq:aR_bound_tau}, \eqref{eq:aR_bound_5tau}, and \eqref{eq:F-ideal} gives the bound \eqref{eq:divisor_bound}. The
bound \eqref{eq:divisor_bound_epsilon} then follows from the standard estimate
$\tau(m)\ll_\varepsilon m^\varepsilon$ (see, e.g., \cite[Theorem~315]{HardyWright}).
\end{proof}

\begin{lemma}[Solutions within a family]\label{lem:family-count}
Let $D\geq 2$ be a nonsquare integer, let $n\neq 0$, and let $T\geq 2$.
The number of integer solutions $(X,Y)$ of $X^2-DY^2=n$ lying in a
single Pell family and satisfying $|Y|\leq T$ is $O(\log T)$, with an
absolute implied constant.
\end{lemma}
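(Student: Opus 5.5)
Fix one solution $\alpha=X_0+Y_0\sqrt D$ in the family. Every solution in the family has the form $\alpha u$ with $u\in R^\times$. Since $D$ is a nonsquare, $R^\times=\{\pm\varepsilon^k:k\in\Z\}$, where $\varepsilon=x_1+y_1\sqrt D>1$ is the fundamental unit of $R$ and $x_1,y_1\geq 1$. I will therefore parametrize the family as $\beta_k^{\pm}=\pm\alpha\varepsilon^k$ for $k\in\Z$. The sign contributes only a factor of $2$, so it suffices to bound the number of $k\in\Z$ with $|Y(\alpha\varepsilon^k)|\leq T$.

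Write $\beta=\alpha\varepsilon^k$ and let $\beta'$ be its conjugate. Then $Y=(\beta-\beta')/(2\sqrt D)$, and $\beta\beta'=n$. Put $\lambda=|\beta|$, so $|\beta'|=|n|/\lambda$. The condition $|Y|\leq T$ gives
\begin{equation*}
\bigl||\beta|-|\beta'|\bigr|\leq |\beta-\beta'|\leq 2\sqrt D\,T .
\end{equation*}
Also $X=(\beta+\beta')/2$ and $X^2=n+DY^2\leq |n|+DT^2$, so $|\beta+\beta'|\leq 2\sqrt{|n|+DT^2}$. Combining the two bounds yields $\max(|\beta|,|\beta'|)\leq |X|+\sqrt D|Y|\leq 2\sqrt{|n|+DT^2}=:H$. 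Hence $\lambda\leq H$ and $|n|/\lambda\leq H$, which confines $\log\lambda$ to an interval of length $\log(H^2/|n|)$. Since $\log\lambda=\log|\alpha|+k\log\varepsilon$, the number of admissible $k$ is at most $1+\log(H^2/|n|)/\log\varepsilon$.

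The main obstacle is making this bound independent of $D$ and $n$. Two facts handle it. First, $\varepsilon\geq 1+\sqrt D\geq 1+\sqrt2$, so $\log\varepsilon\geq c\log(1+\sqrt D)$ is bounded below absolutely, and in fact grows with $D$. Second, $H^2/|n|=4(1+DT^2/|n|)\leq 4(1+DT^2)$ because $|n|\geq1$. Therefore the count is at most
\begin{equation*}
1+\frac{\log\bigl(4(1+DT^2)\bigr)}{\log(1+\sqrt D)}\ll 1+\frac{\log D+\log T}{\log(1+\sqrt D)}\ll 1+\log T .
\end{equation*}
This uses $\log D/\log(1+\sqrt D)\leq 2$ and $\log T/\log(1+\sqrt D)\leq \log T/\log(1+\sqrt2)$. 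Since $T\geq 2$, the right-hand side is $O(\log T)$ with an absolute constant.

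Multiplying by $2$ for the sign $\pm$ completes the argument. The only remaining check is that distinct $(k,\pm)$ give distinct pairs $(X,Y)$, which is not even needed for an upper bound. The delicate step is the uniformity in $D$. That step rests on the lower bound $\varepsilon\geq 1+\sqrt D$, which holds because $y_1\geq1$ and $x_1\geq1$ for the fundamental unit of $\Z[\sqrt D]$.
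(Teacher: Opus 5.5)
Your proof is correct, but it gets the key estimate by a different mechanism than the paper.

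\textbf{The paper's route.} The paper first re-chooses the representative $\alpha_0$ within the family so that $\varepsilon_D^{-2}<|\alpha_0/\overline{\alpha_0}|\le 1$. From this it gets $|\alpha_0|>\varepsilon_D^{-1}$ and $|\overline{\alpha_0}|\ge 1$. It then proves, separately for $k\ge 2$ and $k\le -2$, the exponential lower bound $|Y_k|\ge \varepsilon_D^{|k|-2}/4$, using $\sqrt D<\varepsilon_D$. This forces $|k|\le 2+\log(4T)/\log\varepsilon_D$.

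\textbf{Your route.} You bound both embeddings from above, $\max(|\beta|,|\beta'|)\le H=2\sqrt{|n|+DT^2}$. You then use $|\beta\beta'|=|n|$ to get the matching lower bound $|\beta|\ge |n|/H$. This confines $\log|\alpha|+k\log\varepsilon$ to an interval of length $\log(H^2/|n|)\le \log(4(1+DT^2))$. You need no normalization of the representative and no case split on the sign of $k$.

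\textbf{Comparison.} The cost is a $\log D$ in the numerator, which you absorb with $\varepsilon\ge 1+\sqrt D$. The paper needs essentially the same fact ($\varepsilon_D>\sqrt D$) to cancel the $\sqrt D$ in $Y_k=(A_k-B_k)/(2\sqrt D)$. Both proofs therefore end with a bound of the same shape, $O(1+\log T/\log\varepsilon)$. Your sandwich argument is shorter and more symmetric. The paper's argument yields the slightly more informative statement that $|Y|$ grows exponentially along the family.

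\textbf{A small point.} You use the full fundamental unit $\varepsilon$ of $\Z[\sqrt D]$, which may have norm $-1$, whereas the paper uses the smallest norm-one unit. This is harmless for the count, since you only count those $k$ for which $\alpha\varepsilon^k$ is an actual solution, and for those $\beta\beta'=n$. However, your assertion $x_1,y_1\ge 1$ then also needs checking in the norm $-1$ case. There $\overline{\varepsilon}=-\varepsilon^{-1}$, so $2x_1=\varepsilon-\varepsilon^{-1}>0$ and $2y_1\sqrt D=\varepsilon+\varepsilon^{-1}>0$. Hence $\varepsilon\ge 1+\sqrt D$ holds in both cases.
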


\begin{proof}
        For $\alpha=X+Y\sqrt D\in R$ we write
    $\overline{\alpha}=X-Y\sqrt D$ for its Galois conjugate and
    $\Norm(\alpha)=\alpha\overline{\alpha}=X^2-DY^2$ for its norm. Solutions $(X,Y)$ to $X^2-DY^2=n$ are in bijection with elements $\alpha\in R$ with
    $\Norm(\alpha)=n$.
    By Definition~\ref{def:pell-family}, two solutions
    lie in the same Pell family precisely when the associated elements differ
    by a unit of $R$.
    
    The units of $R$ of norm one form a subgroup 
    \begin{equation}
        R^{\times}_{+1}\defeq \{u\in R^\times : \Norm(u)=1\}
    \end{equation}
    of the unit group $R^\times$. Dirichlet's unit theorem  gives $R^\times=\{\pm1\}\times\langle
    \eta\rangle$ for a fundamental unit $\eta>1$; consequently
    $R^{\times}_{+1}=\{\pm\varepsilon_D^{\,k}:k\in\Z\}$, where $\varepsilon_D>1$
    is the smallest norm-one unit exceeding $1$
    (see, e.g., \cite[Ch.~I, \S7 and \S12]{Neukirch}). Write
    \begin{equation}\label{eq:eps-coords}
        \varepsilon_D=u_0+v_0\sqrt D,\qquad u_0,v_0\in\Z,\qquad
        u_0^2-Dv_0^2=1 .
    \end{equation}
    
    We claim $u_0,v_0\geq 1$. Since $\Norm(\varepsilon_D)=1$ we have
    $\overline{\varepsilon}_D=\varepsilon_D^{-1}$, and since $\varepsilon_D>1$, we have $\varepsilon_D^{-1}\in(0,1)$.
    Thus
    \begin{equation}
         2u_0=\varepsilon_D+\overline{\varepsilon}_D
          =\varepsilon_D+\varepsilon_D^{-1}>0
          \quad
          \text{ and }
          \quad
          2v_0\sqrt D=\varepsilon_D-\overline{\varepsilon}_D
               =\varepsilon_D-\varepsilon_D^{-1}>0.
    \end{equation}
    Therefore $u_0,v_0>0$, and hence $u_0,v_0\geq 1$ as they are integers. Thus
    \begin{equation}\label{eq:eps-lower}
        \varepsilon_D=u_0+v_0\sqrt D\geq 1+\sqrt2,
        \qquad\text{and so}\qquad
        \log\varepsilon_D\geq \log(1+\sqrt2)>0 .
    \end{equation}
    From
    $u_0^2=1+Dv_0^2\geq 1+D$, we have that
    \begin{equation}\label{eq:eps-vs-sqrtD}
        \varepsilon_D=u_0+v_0\sqrt D\geq u_0\geq \sqrt{1+D}>\sqrt D .
    \end{equation}
    
    Now fix a Pell family and pick a representative $\alpha_0=X_0+Y_0\sqrt D\in R$
    with $\Norm(\alpha_0)=n$. Every element of $R$ with norm $n$ that is
    $R^\times$-equivalent to $\alpha_0$ has the form $\pm\alpha_0\varepsilon_D^{\,k}$
    with $k\in\Z$, and replacing $\alpha_0$ by $-\alpha_0$ negates $(X,Y)$
    without changing $|Y|$. It therefore suffices to bound the number of
    $k\in\Z$ for which the solution corresponding to
    \begin{equation}\label{eq:family-param}
        \alpha_k\defeq \alpha_0\varepsilon_D^{\,k}=X_k+Y_k\sqrt D
    \end{equation}
    satisfies $|Y_k|\leq T$; the final count is at most twice this.
    
    We are free to replace $\alpha_0$ by $\alpha_0\varepsilon_D^{\,j}$ for any
    fixed $j\in\Z$, which merely reindexes \eqref{eq:family-param}. Since
    \begin{equation}
        \left|\frac{\alpha_0\varepsilon_D^j}{\overline{\alpha_0\varepsilon_D^j}}\right|=
        \left|\frac{\alpha_0}{\overline{\alpha_0}}\right|\left|\frac{\varepsilon_D^j}{\overline{\varepsilon_D^j}}\right|
        = \left|\frac{\alpha_0}{\overline{\alpha_0}}\right|\varepsilon_D^{2j},
    \end{equation}
    we may choose $j$ so that after reindexing
    \begin{equation}\label{eq:ratio}
        \varepsilon_D^{-2} < \left|\frac{\alpha_0}{\overline{\alpha_0}}\right|
        \le 1.
    \end{equation}
    Because $|\alpha_0\overline{\alpha_0}|=|\Norm(\alpha_0)|=|n|\geq 1$, the two
    inequalities in \eqref{eq:ratio} give, respectively,
    \begin{equation}\label{eq:alpha-lower}
        |\alpha_0|^2=|\alpha_0\overline{\alpha_0}|\cdot
            \left|\frac{\alpha_0}{\overline{\alpha_0}}\right|
        >\varepsilon_D^{-2}
        \quad \text{ and } \quad
        |\overline{\alpha_0}|^2
        =|\alpha_0\overline{\alpha_0}|\cdot
            \left|\frac{\overline{\alpha_0}}{\alpha_0}\right|
        \geq |n|\geq 1,
    \end{equation}
    so that
    \begin{equation}\label{eq:alpha-lower-both}
        |\alpha_0|> \varepsilon_D^{-1}
        \qquad\text{and}\qquad
        |\overline{\alpha_0}|\ge 1 .
    \end{equation}

    Define
    \begin{equation}
        A_k\defeq \alpha_0\varepsilon_D^{\,k},
        \quad 
        \text{ and }
        \quad
        B_k\defeq \overline{\alpha_0}\varepsilon_D^{-k},
    \end{equation}
    so that $\alpha_k=A_k$ and $\overline{\alpha_k}=B_k$ (using
    $\overline{\varepsilon}_D=\varepsilon_D^{-1}$). Subtracting conjugates in equation
    \eqref{eq:family-param},
    \begin{equation}\label{eq:Yk}
        Y_k=\frac{\alpha_k-\overline{\alpha_k}}{2\sqrt D}
           =\frac{A_k-B_k}{2\sqrt D}.
    \end{equation}
    By the inequality \eqref{eq:ratio},
    $|B_0|/|A_0|=|\overline{\alpha_0}/\alpha_0|\geq 1$, and hence, for $k\geq 1$,
    \begin{equation}\label{eq:ratio-k}
        \frac{|A_k|}{|B_k|}
        =\frac{|A_0|}{|B_0|}\,\varepsilon_D^{2k}
        \geq \varepsilon_D^{2k}\cdot\varepsilon_D^{-2}
        =\varepsilon_D^{2(k-1)} .
    \end{equation}
    In particular, by the first inequality in equation \eqref{eq:eps-lower}, for $k\geq 2$ we have 
    \begin{equation}
        \frac{|A_k|}{|B_k|}\geq
        \varepsilon_D^{2}\geq(1+\sqrt2)^2>2,
    \end{equation} 
    so that
    \begin{equation}
        |A_k-B_k|\geq |A_k|-|B_k|\geq \tfrac12|A_k|
    \end{equation}
     Combining this with
    \eqref{eq:Yk}, the lower bound $|\alpha_0|\geq\varepsilon_D^{-1}$ from
    \eqref{eq:alpha-lower-both}, and $\sqrt D<\varepsilon_D$ from
    \eqref{eq:eps-vs-sqrtD}, we obtain, for $k\geq 2$,
    \begin{equation}\label{eq:Yk-lower}
        |Y_k|
        \geq \frac{|A_k|}{4\sqrt D}
        =\frac{|\alpha_0|\,\varepsilon_D^{\,k}}{4\sqrt D}
        \geq \frac{\varepsilon_D^{-1}\varepsilon_D^{\,k}}{4\,\varepsilon_D}
        =\frac{\varepsilon_D^{\,k-2}}{4}.
    \end{equation}
    The analogous computation with the roles of $A_k$ and $B_k$ interchanged
    (using $|\overline{\alpha_0}|\geq 1\geq\varepsilon_D^{-1}$ from
    \eqref{eq:alpha-lower-both}) gives, for $k\leq -2$,
    \begin{equation}\label{eq:Yk-lower-neg}
        |Y_k|\geq \frac{\varepsilon_D^{\,|k|-2}}{4}.
    \end{equation}

    If $|Y_k|\leq T$ then equations \eqref{eq:Yk-lower} and \eqref{eq:Yk-lower-neg} force
    $\varepsilon_D^{\,|k|-2}\leq 4T$. Therefore, using the second inequality in equation \eqref{eq:eps-lower}, we have
    \begin{equation}\label{eq:k-bound}
        |k|\leq 2+\frac{\log(4T)}{\log\varepsilon_D}
        \leq 2+\frac{\log(4T)}{\log(1+\sqrt2)}.
    \end{equation}
     Hence the number of indices $k\in\Z$ with $|k|\geq 2$ and $|Y_k|\leq T$ is
    $O(\log T)$, with an absolute implied constant. Adjoining the three indices $k\in\{-1,0,1\}$ and the factor of $2$ from the choice of sign discussed in the sentence containing equation \eqref{eq:family-param} changes this only by an absolute constant. 
    Therefore the number of solutions in the family with $|Y|\leq T$ is $O(\log T)$, uniformly in $D$ and $n$.
\end{proof}

 \begin{lemma}[The perfect-square case]\label{lem:perfect-square}
If $D$ is a perfect square and $n\neq 0$, then
\begin{equation}
    \#\{(X,Y)\in\Z^2:X^2-DY^2=n\}\ll \tau(|n|).
\end{equation}
\end{lemma}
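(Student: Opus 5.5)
Write $D=d^2$ with $d\geq 1$ an integer. The plan is to factor the left-hand side over $\Z$ and reduce the count to counting factorizations of $n$:
\begin{equation}
    X^2-DY^2=(X-dY)(X+dY)=n.
\end{equation}

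\textbf{Step 1 (from solutions to factorizations).} Any integer solution $(X,Y)$ determines the ordered pair of integers
\begin{equation}
    (u,v)\defeq (X-dY,\;X+dY), \qquad uv=n .
\end{equation}
Since $n\neq 0$, both $u$ and $v$ are nonzero, and $v=n/u$ is determined by $u$.

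\textbf{Step 2 (injectivity).} The map $(X,Y)\mapsto (u,v)$ is injective, because we can recover
\begin{equation}
    X=\frac{u+v}{2}, \qquad Y=\frac{v-u}{2d}.
\end{equation}
So the number of solutions is at most the number of ordered pairs of integers $(u,v)$ with $uv=n$. Not every such pair needs to come from an integer solution (parity and divisibility by $2d$ may fail), but this only helps the upper bound.

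\textbf{Step 3 (counting).} The pair $(u,v)$ is determined by $u$, which is a divisor of $n$ of either sign. Hence the number of pairs is exactly $2\tau(|n|)$, and therefore
\begin{equation}
    \#\{(X,Y)\in\Z^2 : X^2-DY^2=n\}\leq 2\tau(|n|).
\end{equation}
This gives the claimed bound with an absolute implied constant, uniform in $D$.

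There is no real obstacle here; the only care needed is to use $n\neq 0$ so that no factor vanishes and the count is finite. The degenerate case $D=0$ does not arise, since $D=sr\geq 2$ in the intended application. Even if $D=0$ were allowed, it would make $Y$ unconstrained, so it is rightly excluded. Note also that, unlike Lemma~\ref{lem:family-count}, this count needs no restriction on $|Y|$, since the solution set is already finite.
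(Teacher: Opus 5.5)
Your proof is correct and is essentially the paper's argument: factor $X^2-DY^2=(X-dY)(X+dY)$, recover $(X,Y)$ injectively from the ordered factorization via $X=(u+v)/2$, $Y=(v-u)/(2d)$, and count the $2\tau(|n|)$ signed divisors of $n$. Your explicit note that $d\geq 1$ is needed (so $D=0$ is excluded, which holds since $D=sr\geq 2$) makes precise a point the paper leaves implicit.
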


\begin{proof}
Write $D=d^2$. Then
\begin{equation}
    X^2-DY^2=(X-dY)(X+dY)=n.
\end{equation}
Each solution determines an ordered factorization
$n=n_1n_2$ with $n_1=X-dY$ and $n_2=X+dY$. Conversely, each ordered
factorization determines at most one pair $(X,Y)$, namely
\begin{equation}
    X=\frac{n_1+n_2}{2},\qquad
    Y=\frac{n_2-n_1}{2d},
\end{equation}
if these quantities are integers.
The number of ordered factorizations of $n$ is
\begin{equation}
    \#\{(n_1,n_2)\in \Z^2 : n=n_1 n_2\}=\#\{n_1\in \Z : n_1|n\}=2\tau(|n|)\ll \tau(|n|).
\end{equation}
\end{proof}

\section{The upper bound}\label{sec:upper-proof}

Motivated by Proposition~\ref{prop:curve}, we define
\begin{equation}\label{eq:Zsr}
    Z_{s,r}(N)\defeq
    \#\{(a,b)\in[1,N]^2:s f(b)=r f(a)\},
\end{equation}
and note that $M_{s,r}(N)\leq Z_{s,r}(N)$.

\begin{lemma}\label{lem:Zsr-bound}
For all integers $s>r\geq 1$ and $N\geq 1$,
\begin{equation}
    Z_{s,r}(N)
    \ll_f
    \tau\bigl(r(s-r)|\Delta|\bigr)\bigl(1+\log(sN)\bigr).
\end{equation}
In particular, $M_{s,r}(N)$ satisfies the same bound.
\end{lemma}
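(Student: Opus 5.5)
We pass through the Pell transformation of \S\ref{sec:pell}. Given $(a,b)\in[1,N]^2$ with $s f(b)=r f(a)$, set $X=r(2Aa+B)$, $Y=2Ab+B$, $D=sr$, $n=r(r-s)\Delta$. Then $X^2-DY^2=n$, the map $(a,b)\mapsto(X,Y)$ is injective, and $|Y|\le 2AN+|B|\ll_f N$. Note $|n|=r(s-r)|\Delta|$, which is exactly the quantity inside $\tau$ in the statement. So it suffices to count integer solutions of $X^2-DY^2=n$ with $|Y|\le T\defeq 2AN+|B|+2$ (taking $T\ge 2$).

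\emph{Case $D$ a perfect square.} Lemma~\ref{lem:perfect-square} gives at most $\ll\tau(|n|)$ solutions in total. This is within the claimed bound.

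\emph{Case $D$ nonsquare.} Here Lemma~\ref{lem:rep-bound} cannot be applied directly, since it assumes $D$ squarefree. We therefore reduce to the squarefree case. Write $D=e^2D_0$ with $D_0\ge 2$ squarefree, and set $Y'=eY$. Then $X^2-D_0Y'^2=n$, the map $(X,Y)\mapsto(X,Y')$ is injective, and $|Y'|\le eT\le \sqrt{D}\,T\le sT$.

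Apply Lemma~\ref{lem:rep-bound} to $(D_0,n)$: the solutions $(X,Y')$ fall into at most $5\tau(|n|)$ Pell families with respect to $\Z[\sqrt{D_0}]$. Lemma~\ref{lem:family-count}, with bound $eT$, shows each family contributes $O(\log(eT))$ solutions with $|Y'|\le eT$, uniformly in $D_0$ and $n$. Since $\log(eT)\le\log(sT)\ll_f 1+\log(sN)$, the total is
\[
\ll_f \tau(|n|)\,\bigl(1+\log(sN)\bigr).
\]
This explains the $\log(sN)$ rather than $\log N$ in the statement.

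The main point to check is the case split and this reduction: the Pell-family framework of Lemma~\ref{lem:rep-bound} requires squarefree $D$, and passing to $D_0$ inflates the $Y$-range by the factor $e\le\sqrt{sr}<s$. Because only logarithmic dependence on the range enters, this costs only $\log s$. Finally, since $M_{s,r}(N)\le Z_{s,r}(N)$, the bound for $M_{s,r}(N)$ follows. The constant depends only on $f$, through $A$ and $B$ in $T$.
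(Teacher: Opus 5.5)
Your proposal is correct and follows the paper's proof essentially step for step. You use the same Pell change of variables, the same split into the perfect-square case (Lemma~\ref{lem:perfect-square}) and the nonsquare case, and the same reduction $D=e^2D_0$, $Y'=eY$ with $e\le\sqrt{D}\le s$, so that Lemma~\ref{lem:rep-bound} and Lemma~\ref{lem:family-count} apply with range $eT$ and produce the $\log(sN)$ factor. Padding $T$ by $2$ is harmless but unnecessary, since $2AN+|B|\geq 2A\geq 2$ already.
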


\begin{proof}
As before, the change of variables \eqref{eq:pell-change} gives an injective map from pairs $(a,b)\in[1,N]^2$ satisfying $s f(b)=r f(a)$ to solutions of 
\begin{equation}
    X^2-DY^2=n,
    \qquad D=sr,
    \qquad n=r(r-s)\Delta,
\end{equation}
with
\begin{equation}
    |Y|\leq 2AN+|B|\defeq T.
\end{equation}
Here
\begin{equation}
    |n|=r(s-r)|\Delta|.
\end{equation}

If $D$ is a perfect square, Lemma~\ref{lem:perfect-square} gives
\begin{equation}
    Z_{s,r}(N)\ll \tau(|n|),
\end{equation}
which is stronger than the claimed bound.

Suppose that $D$ is not a perfect square. Write $D=\ell^2D_0$, where
$D_0\geq 2$ is squarefree. The substitution $Z=\ell Y$ maps the solutions
injectively to solutions of
\begin{equation}
    X^2-D_0Z^2=n
\end{equation}
with $|Z|\leq \ell T$. By Lemma~\ref{lem:rep-bound}, these solutions lie
in at most $5\tau(|n|)$ Pell families, and by
Lemma~\ref{lem:family-count} each family contributes
$O(1+\log(\ell T))$ solutions. Since $\ell\leq\sqrt D\leq s$ and
$T\ll_f N$,
\begin{equation}
    1+\log(\ell T)\ll_f 1+\log(sN).
\end{equation}
Combining these counts yields the bound
\begin{equation}
    Z_{s,r}(N)\ll_f \tau(|n|)(1+\log(sN))
    = \tau(r(s-r)|\Delta|)(1+\log(sN)).
\end{equation}
\end{proof}

For $s\in\Z_{>0}$, let
\begin{equation}
    \sigma_{-1}(s)\defeq\sum_{d\mid s}\frac{1}{d}
\end{equation}
denote the sum of reciprocal divisors of $s$.

\begin{lemma}\label{lem:divisor-sums}
\begin{romenum}
\item For every integer $s\geq 2$,
\begin{equation}
    \sum_{r=1}^{s-1}\tau(r)\tau(s-r)
    \ll s\,\sigma_{-1}(s)(\log 2s)^2.
\end{equation}
\item For every $X\geq 2$,
\begin{equation}
    \sum_{2\leq s\leq X}
    \frac{\sigma_{-1}(s)(\log 2s)^2}{s}
    \ll (\log 2X)^3.
\end{equation}
\end{romenum}
\end{lemma}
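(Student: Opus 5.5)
For part (i), the plan is to expand $\tau(r)\tau(s-r)$ as a double sum over divisors $d\mid r$ and $e\mid s-r$ and interchange the order of summation:
\begin{equation}
    \sum_{r=1}^{s-1}\tau(r)\tau(s-r)=\sum_{d,e<s}\#\{1\leq r<s: d\mid r,\ e\mid s-r\}.
\end{equation}
The two congruences $r\equiv 0\pmod d$ and $r\equiv s\pmod e$ are compatible only when $\gcd(d,e)\mid s$. When they are compatible they pin down $r$ modulo $\mathrm{lcm}(d,e)$, so the inner count is at most $s\gcd(d,e)/(de)+1$.

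The ``$+1$'' error terms need separate treatment. Summed naively over all pairs $d,e<s$, they would give $s^2$, which is too large. To avoid this I will use the symmetry $r\leftrightarrow s-r$ together with Dirichlet's hyperbola trick: write $\tau(r)\leq 2\#\{d\mid r: d\leq\sqrt r\}$, and similarly for $\tau(s-r)$. With this, only $d,e\leq\sqrt s$ occur, the count is $\leq 4\sum_{d,e\leq\sqrt s,\ \gcd(d,e)\mid s}(s\gcd(d,e)/(de)+1)$, and the $+1$ terms contribute at most $4s$.

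For the main term, I group pairs by $g=\gcd(d,e)$, which must divide $s$. Writing $d=gd'$ and $e=ge'$, each $g$ contributes at most $\frac{s}{g}\bigl(\sum_{d'\leq\sqrt s}1/d'\bigr)^2\ll \frac{s}{g}(\log 2s)^2$. Summing over $g\mid s$ then gives $s\,\sigma_{-1}(s)(\log 2s)^2$, which is the bound in (i).

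For part (ii), since $\log 2s\leq\log 2X$, it suffices to show $\sum_{s\leq X}\sigma_{-1}(s)/s\ll\log 2X$. Expanding $\sigma_{-1}(s)=\sum_{d\mid s}1/d$ and writing $s=dk$ gives
\begin{equation}
    \sum_{s\leq X}\frac{\sigma_{-1}(s)}{s}=\sum_{dk\leq X}\frac{1}{d^2k}\leq\zeta(2)\sum_{k\leq X}\frac1k\ll\log 2X,
\end{equation}
and multiplying by $(\log 2X)^2$ finishes (ii).

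I expect the main obstacle to be the error-term control in (i), which is exactly why the divisors are truncated at $\sqrt{s}$. One point needs checking in that step: $\sqrt r$ and $\sqrt{s-r}$ are both at most $\sqrt s$, so the truncation to $d,e\leq\sqrt s$ is valid uniformly over all $1\leq r<s$.
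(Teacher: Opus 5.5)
Your proposal is correct and follows the paper's proof essentially step for step. Both proofs use the truncation $\tau(k)\le 2\#\{d\mid k: d\le\sqrt k\}$ to restrict to $d,e\le\sqrt s$, bound each count by $s\gcd(d,e)/(de)+1$ with the $+1$ terms contributing $O(s)$, group by $\gcd(d,e)\mid s$ to get $s\,\sigma_{-1}(s)(\log 2s)^2$, and handle part (ii) by the expansion $s=dk$ with $\sum 1/d^2$ convergent.
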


\begin{proof}
For part (i), since the divisors of $k$ pair as $d$ and $k/d$,
\begin{equation}\label{eq:tau-bound}
    \tau(k)\leq 2\#\{d\mid k:d\leq\sqrt{k}\}.
\end{equation}
Defining 
\begin{equation}
    N_{d,e}(s)\defeq\#\{0<r<s:d\mid r,\ e\mid s-r\},
\end{equation}
the bound \eqref{eq:tau-bound} implies
\begin{equation}\label{eq:sum_tau(r)tau(s-r)_basic_bound}
    \sum_{r=1}^{s-1}\tau(r)\tau(s-r)
    \leq
    4\sum_{d\leq\sqrt{s}}\sum_{e\leq\sqrt{s}}N_{d,e}(s).
\end{equation}
The congruences \(r \equiv 0 \pmod d\) and \(s \equiv r \pmod e\) are solvable only if $\gamma\defeq\gcd(d,e)$ divides $s$,
and in that case the solutions form an arithmetic progression modulo
$de/\gamma$. Thus
\begin{equation}
    N_{d,e}(s)\leq \frac{s\gamma}{de}+1.
\end{equation}
The terms $1$ contribute $O(s)$. For the remaining terms, write
$d=\gamma d'$ and $e=\gamma e'$ to obtain
\begin{salign}
    \sum_{\substack{d,e\leq\sqrt{s}\\ \gcd(d,e)\mid s}}
    \frac{s\gcd(d,e)}{de}
    &\leq
    \sum_{\substack{\gamma\mid s\\ \gamma\leq\sqrt{s}}}
    \frac{s}{\gamma}
    \left(\sum_{d'\leq\sqrt{s}/\gamma}\frac{1}{d'}\right)
    \left(\sum_{e'\leq\sqrt{s}/\gamma}\frac{1}{e'}\right)\\
    &\ll s\,\sigma_{-1}(s)(\log 2s)^2.
\end{salign}
This bound, combined with the bound \eqref{eq:sum_tau(r)tau(s-r)_basic_bound}, proves part (i).

For part (ii), write $s=uv$:
\begin{salign}
    \sum_{2\leq s\leq X}
    \frac{\sigma_{-1}(s)(\log 2s)^2}{s}
    &\leq
    (\log 2X)^2
    \sum_{u\leq X}\frac{1}{u}
    \sum_{v\leq X/u}\frac{1}{uv}\\
    &\ll
    (\log 2X)^2(1+\log X)
    \sum_{u\geq 1}\frac{1}{u^2}\\
    &\ll (\log 2X)^3.
\end{salign}
\end{proof}

\begin{proof}[Proof of Theorem~\ref{thm:upper}]
By the definition of $S_F(N)$ (equation \eqref{eq:S-rearranged}), the bound $M_{s,r}(N)\leq Z_{s,r}(N)$, and
Lemma~\ref{lem:Zsr-bound}, we obtain the bound
\begin{salign}\label{eq:S_F_bound}
    S_F(N)
    &\ll_f
    \sum_{2\leq s\leq f(N)}\frac{1}{s^m}
    \sum_{r=1}^{s-1}
    \tau\bigl(r(s-r)|\Delta|\bigr)\bigl(1+\log(sN)\bigr).
\end{salign}
For $2\leq s\leq f(N)\ll_f N^2$ and $N\geq 2$, we have the bound
\begin{equation}\label{eq:1+log(sN)_bound}
    1+\log(sN)\ll_f\log N.
\end{equation}
 By submultiplicativity of the divisor function $\tau$,
\begin{equation}\label{eq:submultiplicative_bound}
    \tau\bigl(r(s-r)|\Delta|\bigr)
    \leq \tau(|\Delta|)\tau(r)\tau(s-r)
    \ll_f \tau(r)\tau(s-r).
\end{equation}
The bounds \eqref{eq:S_F_bound}, \eqref{eq:1+log(sN)_bound}, and
\eqref{eq:submultiplicative_bound}, together with Lemma~\ref{lem:divisor-sums}(i), give
\begin{equation}
    S_F(N)
    \ll_f
    \log N
    \sum_{2\leq s\leq f(N)}
    \frac{\sigma_{-1}(s)(\log 2s)^2}{s^{m-1}}.
\end{equation}

If $m\geq 3$, then $\sigma_{-1}(s)\leq 1+\log s$ and
\begin{equation}
    \sum_{s\geq 2}
    \frac{\sigma_{-1}(s)(\log 2s)^2}{s^{m-1}}<\infty,
\end{equation}
so $S_F(N)\ll_F\log N$.

If $m=2$, Lemma~\ref{lem:divisor-sums}(ii), with
$X=\max(f(N),2)\ll_f N^2$, gives
\begin{equation}
    S_F(N)\ll_F(\log N)^4.
\end{equation}
\end{proof}

\begin{proof}[Proof of Theorem~\ref{thm:visibility}, upper bounds]
By the upper bound \eqref{eq:invisible-S-intro} and Theorem~\ref{thm:upper},
\begin{equation}
    \#\mathrm{Invisible}_F(N)
    \ll_F
    \begin{cases}
        N\log N,& \text{ if } m\geq 3,\\
        N(\log N)^4,& \text{ if } m=2.
    \end{cases}
\end{equation}
\end{proof}

\section{The lower bound}\label{sec:lower-proof}

We now prove Theorem~\ref{thm:lower}. The proof has three steps. First we
construct one relation
\begin{equation}
    s_0f(b_0)=r_0f(a_0)
\end{equation}
with $s_0r_0$ nonsquare. Then we propagate this relation along a Pell
family. Finally, we count the lattice points blocked by these relations.

\begin{lemma}[A seed pair]\label{lem:seed}
There exist integers $a_0>b_0>n_f$ and coprime integers $s_0>r_0\geq 1$
such that
\begin{equation}\label{eq:seed-relation}
    s_0f(b_0)=r_0f(a_0)
\end{equation}
and $s_0r_0$ is not a perfect square.
\end{lemma}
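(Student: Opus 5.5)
The plan is to force $f(b)\mid f(a)$, so that $r_0=1$ and $s_0=f(a_0)/f(b_0)$. The only thing left to arrange is that $s_0$ is not a perfect square.

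Fix any integer $b_0>n_f$. By \eqref{eq:monotone}, $f(b_0)>0$ and $2Ab_0+B>0$. For an integer $k\geq 1$, set $a=b_0+k\,f(b_0)$. Taylor expansion of the quadratic $f$ gives
\begin{equation}
    f(a)=f(b_0)+k f(b_0)(2Ab_0+B)+Ak^2f(b_0)^2=f(b_0)\,P(k),
\end{equation}
where
\begin{equation}
    P(k)\defeq Af(b_0)\,k^2+(2Ab_0+B)\,k+1 .
\end{equation}
Thus $f(b_0)\mid f(a)$, so $g(a,b_0)=f(b_0)$. We get $r(a,b_0)=1$ and $s(a,b_0)=P(k)$, and the relation $P(k)f(b_0)=1\cdot f(a)$ holds. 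Moreover $a>b_0>n_f$ and $P(k)>1$, since every coefficient of $P$ is positive and $k\geq 1$. Coprimality of $s_0$ and $r_0=1$ is automatic, and $s_0r_0=P(k)$. So it suffices to find some $k\geq 1$ with $P(k)$ not a perfect square.

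The discriminant of $P$ is
\begin{equation}
    (2Ab_0+B)^2-4Af(b_0)=\Delta\neq 0,
\end{equation}
using $4Af(x)=(2Ax+B)^2-\Delta$. Hence $P$ is not the square of a polynomial in $\Q[k]$.

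This last step is the main, though mild, obstacle: we must show that a quadratic which is not a square of a polynomial cannot take square values at all large integers. Suppose, for contradiction, that $P(k)=y_k^2$ with $y_k\geq 0$ for all $k\geq k_0$. Write $\lambda=\sqrt{Af(b_0)}>0$. Then
\begin{equation}
    y_k=\sqrt{P(k)}=\lambda k+\frac{2Ab_0+B}{2\lambda}+O(1/k).
\end{equation}
It follows that $y_{k+1}-y_k\to\lambda$. These differences are integers, so they are eventually constant, equal to some integer $d$ (and $d=\lambda$). Hence $y_k=dk+e$ for all large $k$, for some integer $e$. The polynomials $P(k)$ and $(dk+e)^2$ then agree at infinitely many points, so $P=(dk+e)^2$ identically. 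This forces the discriminant of $P$ to be $0$, contradicting $\Delta\neq 0$.

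Therefore some $k\geq 1$ has $P(k)$ a nonsquare. Taking $a_0=b_0+kf(b_0)$, $s_0=P(k)$ and $r_0=1$ proves the lemma.
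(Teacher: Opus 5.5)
Your proof is correct. Your construction coincides with the paper's: $a_0=b_0+kf(b_0)$, $r_0=1$, and $s_0=P(k)=Af(b_0)k^2+(2Ab_0+B)k+1$. The two proofs differ only in how they show that some value $P(k)$ is not a perfect square.

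The paper completes the square, rewriting $P(k)=y^2$ as the Pell-type equation $U_k^2-4Af(b_0)y^2=\Delta$ with $U_k=2Aa_k+B$. It then counts solutions: by factoring $\Delta$ when $4Af(b_0)$ is a square, and otherwise via Lemmas~\ref{lem:rep-bound} and~\ref{lem:family-count}. This shows that at most $O_f(\log T)$ of the $k\le T$ are bad.

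You instead prove directly that a quadratic with nonzero discriminant cannot take square values at all large integers. You use the expansion $\sqrt{P(k)}=\lambda k+(2Ab_0+B)/(2\lambda)+O(1/k)$, together with the fact that a convergent sequence of integers is eventually constant. This forces $P$ to be the square of a linear polynomial, contradicting $\Delta\neq 0$.

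Your route is elementary and self-contained, with no ideal counting or unit groups. It yields only that $P(k)$ is a nonsquare for arbitrarily large $k$, whereas the paper gets the quantitative statement that the good $k$ have density one. The lemma needs just one good $k$, so nothing is lost. The paper's version has the contextual advantage of reusing Pell machinery it has already built for the upper bound. Incidentally, you also drop the paper's extra requirement $f(b_0)\ge 2$; taking any $b_0>n_f$ works, since $f(b_0)\ge 1$ already gives $a_0>b_0$ and $P(k)>1$.
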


\begin{proof}
Choose $b_0>n_f$ such that
\begin{equation}
   f(b_0)\geq 2.
\end{equation}
For $k\geq 1$, define
\begin{equation}
    a_k\defeq b_0+k f(b_0)
    \qquad 
    \text{ and }
    \qquad
    s_k\defeq\frac{f(a_k)}{f(b_0)}.
\end{equation}
Since $a_k\equiv b_0\pmod{f(b_0)}$ and $f\in\Z[x]$, we have
$f(a_k)\equiv f(b_0) \equiv 0\pmod{f(b_0)}$. Thus $s_k$ is a positive
integer and
\begin{equation}
    \gcd(f(a_k),f(b_0))=f(b_0).
\end{equation}
Moreover, expanding $f\left(b_0+kf(b_0)\right)$ gives the explicit formula
\begin{equation}
    s_k=A f(b_0)k^2+(2Ab_0+B)k+1.
\end{equation}
Since $a_k>b_0>n_f$, the monotonicity condition \eqref{eq:monotone}
gives $s_k>1$.

Call $k$ bad if $s_k=y^2$ for some integer $y\geq 1$.
Define
\begin{equation}
    U_k\defeq 2Aa_k+B=2A(b_0+kf(b_0))+B.
\end{equation}
For a bad $k$,
$f(a_k)=f(b_0)y^2$, and completing the square gives
\begin{equation}\label{eq:seed-pell}
    U_k^2-4Af(b_0) y^2=\Delta.
\end{equation}
We now distinguish according to whether $4Af(b_0)$ is a square.

Suppose that $4Af(b_0)=d^2$ for some integer $d\geq 1$. Then
\begin{equation}
    (U_k-dy)(U_k+dy)=\Delta.
\end{equation}
There are only $2\tau(|\Delta|)$ ordered integer factorizations of the
fixed nonzero integer $\Delta$. Each such factorization determines at
most one pair $(U_k,y)$, and $U_k$ determines $k$. Hence there are
$O_f(1)$ bad values of $k$ in this case.

Suppose instead that $4Af(b_0)$ is not a perfect square. Write
\begin{equation}
    4Af(b_0)=\ell^2D_0,
\end{equation}
where $\ell$ and $D_0$ are positive integers with $D_0\geq 2$ squarefree. Defining $Z\defeq \ell y$, equation
\eqref{eq:seed-pell} becomes
\begin{equation}
    U_k^2-D_0Z^2=\Delta.
\end{equation}
If $k\leq T$ is bad, then $|U_k|\ll_f T$, and the equation implies
$|Z|\ll_f T$. Since $U_k$ determines $k$, Lemma~\ref{lem:rep-bound}
and Lemma~\ref{lem:family-count} show that the number of bad $k\leq T$
is $O_f(\log T)$.

In either case, the number of bad $k\leq T$ is $o(T)$. We may therefore
choose a nonbad integer $k_0\geq 1$. Define
\begin{equation}
    a_0\defeq a_{k_0},
    \qquad s_0\defeq s_{k_0},
    \qquad r_0\defeq 1.
\end{equation}
Then $a_0>b_0>n_f$, the integers $s_0>r_0\geq 1$ are coprime,
the equality \eqref{eq:seed-relation} holds, and $s_0r_0=s_0$ is not a perfect
square.
\end{proof}

\begin{proposition}[Propagation along a Pell family]\label{prop:propagation}
There exist coprime integers $s_0>r_0\geq 1$, an integer $j_0\geq 0$, and
pairs of integers $(a_j,b_j)_{j\geq j_0}$ such that:
\begin{romenum}
\item $n_f<b_j<a_j$ for all $j\geq j_0$, and $a_j$ is strictly increasing;
\item $s_0f(b_j)=r_0f(a_j)$ for all $j\geq j_0$, and consequently
$s(a_j,b_j)=s_0$ and $r(a_j,b_j)=r_0$;
\item $\log a_j\asymp_f j$ as $j\to\infty$.
\end{romenum}
In particular,
\begin{equation}\label{eq:Msr-lower}
    M_{s_0,r_0}(N)\gg_f\log N.
\end{equation}
\end{proposition}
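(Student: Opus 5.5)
Take $(s_0,r_0,a_0,b_0)$ from Lemma~\ref{lem:seed}, set $D\defeq s_0r_0$ (not a perfect square), and use the change of variables \eqref{eq:pell-change}. The seed gives a solution $X_0+Y_0\sqrt D$ of $X^2-DY^2=n$, where $X_0=r_0(2Aa_0+B)$, $Y_0=2Ab_0+B$ and $n=r_0(r_0-s_0)\Delta$. We multiply it by powers of a unit $\eta=u+v\sqrt D\in\Z[\sqrt D]$ of norm $1$, chosen so that the new solutions still come from integer points $(a,b)$.

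\textbf{Step 1: choice of unit.} Let $\varepsilon_D=u_0+v_0\sqrt D$ be as in the proof of Lemma~\ref{lem:family-count}. The powers of $\varepsilon_D$ modulo the positive integer $M\defeq 2Ar_0$ form a finite cyclic group in $(\Z[\sqrt D]/M)^\times$, so some power $\eta=\varepsilon_D^{e}=u+v\sqrt D$ satisfies $u\equiv1$ and $v\equiv0\pmod{M}$. Put $X_j+Y_j\sqrt D\defeq (X_0+Y_0\sqrt D)\eta^j$. Then $X_j\equiv X_0$ and $Y_j\equiv Y_0\pmod M$, so $X_j\equiv r_0B\pmod{2Ar_0}$ and $Y_j\equiv B\pmod{2A}$. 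Hence
\[
a_j\defeq \frac{X_j/r_0-B}{2A},\qquad b_j\defeq\frac{Y_j-B}{2A}
\]
are integers. Reversing the completion of squares gives $s_0f(b_j)=r_0f(a_j)$.

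\textbf{Step 2: positivity and growth.} For $j\geq0$, $X_0+Y_0\sqrt D>0$ and $\eta>1$, so $X_j+Y_j\sqrt D=(X_0+Y_0\sqrt D)\eta^j\to\infty$. The conjugate $(X_0-Y_0\sqrt D)\eta^{-j}\to0$. Therefore $X_j,Y_j\asymp \eta^j$ and both are positive for $j$ large, which gives $\log a_j,\log b_j\asymp_f j$ and proves (iii). Both sequences are eventually strictly increasing, since consecutive terms differ by about a factor $\eta\geq 1+\sqrt2$. Choose $j_0$ so that for $j\geq j_0$ we have $b_j>n_f$ and monotonicity holds.

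\textbf{Step 3: $b_j<a_j$ and property (ii).} Since $s_0>r_0$, the relation $f(a_j)=(s_0/r_0)f(b_j)>f(b_j)$ together with \eqref{eq:monotone} forces $a_j>b_j$. Because $\gcd(s_0,r_0)=1$, the relation $s_0f(b_j)=r_0f(a_j)$ gives $f(a_j)=s_0t$ and $f(b_j)=r_0t$ with $t=\gcd(f(a_j),f(b_j))$. Hence $s(a_j,b_j)=s_0$ and $r(a_j,b_j)=r_0$.

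\textbf{Step 4: the count.} The pairs $(a_j,b_j)$ with $j_0\leq j$ and $a_j\leq N$ are distinct because $a_j$ is increasing. By (iii) the number of such $j$ is at least $c\log N-O(1)$, which gives \eqref{eq:Msr-lower}.

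The main obstacle is Step 1, the integrality of $(a_j,b_j)$. It is handled by taking a power of $\varepsilon_D$ congruent to $1$ modulo $M$, which exists because the image of $\varepsilon_D$ in the finite ring $\Z[\sqrt D]/M\Z[\sqrt D]$ is a unit and so has finite order. Everything else is routine asymptotics.
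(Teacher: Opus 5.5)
Your proposal is correct and follows essentially the same route as the paper. Both take the seed from Lemma~\ref{lem:seed}, multiply by a power of $\varepsilon_D$ congruent to $1$ modulo $2Ar_0\,\Z[\sqrt D]$ to keep $a_j,b_j$ integral, and finish with the conjugate-decay asymptotics and the coprimality argument. The one small difference is that you deduce $b_j<a_j$ from $f(a_j)=(s_0/r_0)f(b_j)>f(b_j)$ plus monotonicity, which also uses $a_j>n_f$ (automatic for large $j$), whereas the paper uses the limit $a_j/b_j\to\sqrt{s_0/r_0}>1$.
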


\begin{proof}
Let $(a_0,b_0,s_0,r_0)$ be the seed data from Lemma~\ref{lem:seed}, and
put
\begin{equation}
    D\defeq s_0r_0,
    \qquad
    n\defeq r_0(r_0-s_0)\Delta.
\end{equation}
The change of variables \eqref{eq:pell-change} sends the seed pair $(a_0,b_0)$ to
\begin{equation}
    X_0 \defeq r_0(2Aa_0+B),
    \qquad
    Y_0 \defeq 2Ab_0+B,
\end{equation}
a positive solution of the generalized Pell equation
\begin{equation}
    X^2-DY^2=n.
\end{equation}
Define
\begin{equation}
    \alpha_0\defeq X_0+Y_0\sqrt D,
    \qquad
    R\defeq\Z[\sqrt D],
    \qquad
    M\defeq 2Ar_0.
\end{equation}
Let $\varepsilon_D>1$ be the fundamental norm-one unit of $R$. Since
the quotient ring $R/MR$ is finite, there is an integer $L\geq 1$ such that
\begin{equation}\label{eq:unit-congruence}
    \varepsilon_D^L\equiv 1\pmod{MR}.
\end{equation}
For $j\geq 0$, define $X_j,Y_j\in\Z$ by
\begin{equation}
    X_j+Y_j\sqrt D\defeq\alpha_0\varepsilon_D^{jL}.
\end{equation}
Then $X_j^2-DY_j^2=n$, and the congruence \eqref{eq:unit-congruence} gives
\begin{equation}
    X_j\equiv X_0\pmod{2Ar_0},
    \qquad
    Y_j\equiv Y_0\pmod{2Ar_0}.
\end{equation}
Combining these congruences with the definitions of $X_0$ and $Y_0$ shows that
\begin{equation}
    a_j\defeq\frac{X_j/r_0-B}{2A},
    \qquad
    b_j\defeq\frac{Y_j-B}{2A}
\end{equation}
are integers.

Writing $\overline{\alpha_0}=X_0-Y_0\sqrt D$, we have
\begin{equation}
    X_j=\frac{\alpha_0\varepsilon_D^{jL}
        +\overline{\alpha_0}\varepsilon_D^{-jL}}{2},
    \qquad
    Y_j=\frac{\alpha_0\varepsilon_D^{jL}
        -\overline{\alpha_0}\varepsilon_D^{-jL}}{2\sqrt D}.
\end{equation}
It follows that $X_j$ and $Y_j$ are positive and strictly increasing for all sufficiently large $j$ and that $X_j/Y_j\to\sqrt D$ as $j\to\infty$. Moreover,
\begin{equation}
    a_j = \frac{X_j/r_0-B}{2A}
    = \frac{\alpha_0}{4Ar_0}\varepsilon_D^{jL}
      \left(1+\frac{\overline{\alpha_0}}{\alpha_0}
      \varepsilon_D^{-2jL}\right)-\frac{B}{2A}.
\end{equation}
Consequently,
\begin{equation}
    \log a_j
    =jL\log(\varepsilon_D)
     +\log\left(\frac{\alpha_0}{4Ar_0}\right)
     +O_f\left(\varepsilon_D^{-jL}\right)
    =jL\log\varepsilon_D+O_f(1)\asymp_f j.
\end{equation}
Moreover,
\begin{equation}
    \lim_{j\to\infty} \frac{a_j}{b_j}
    = \lim_{j\to\infty} \frac{X_j/r_0-B}{Y_j-B}
    =\frac{\sqrt D}{r_0} 
    =\sqrt{\frac{s_0}{r_0}}>1.
\end{equation}
Hence, we may choose $j_0$ so that $n_f<b_j<a_j$ and $a_j$ are strictly increasing for every $j\geq j_0$.

Substituting
$X_j=r_0(2Aa_j+B)$ and $Y_j=2Ab_j+B$ into
$X_j^2-DY_j^2=n$ and using
$(2Ax+B)^2=4Af(x)+\Delta$ gives
\begin{equation}
    s_0f(b_j)=r_0f(a_j).
\end{equation}
Since $\gcd(s_0,r_0)=1$ and $f(a_j),f(b_j)>0$, we have $s(a_j,b_j)=s_0$ and $r(a_j,b_j)=r_0$. The growth estimate for
$a_j$ now gives \eqref{eq:Msr-lower}.
\end{proof}

\begin{lemma}[Blocked heights]\label{lem:blocked-heights}
Let $m\geq 1$ and $F=f^m$, and suppose $n_f<b<a\leq N$. If $h\leq N$ is a positive integer divisible by $s(a,b)^m$, then the point
$(a,h)$ is invisible along $F$.
\end{lemma}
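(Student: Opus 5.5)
Write $s=s(a,b)$, $r=r(a,b)$ and $g=g(a,b)$, so that $f(a)=sg$ and $f(b)=rg$ with $\gcd(s,r)=1$. Suppose $h\le N$ is a positive integer with $s^m\mid h$, say $h=ks^m$. By Definition~\ref{def:visibility}, the point $(a,h)$ is visible exactly when there is a rational $t$ with $h=tF(a)$ such that $a$ is the least positive integer $u$ with $tF(u)\in\Z_{>0}$. Since $F(a)=f(a)^m>0$, the only candidate is
\begin{equation}
    t=\frac{h}{F(a)}=\frac{ks^m}{s^mg^m}=\frac{k}{g^m}.
\end{equation}
So it suffices to exhibit a positive integer $u<a$ for which $tF(u)$ is a positive integer. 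This shows that $a$ is not minimal, and hence that $(a,h)$ is invisible.

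Take $u=b$. We have $b\ge 1$ because $b>n_f\ge 1$, and $b<a$ by hypothesis. Then
\begin{equation}
    tF(b)=\frac{k}{g^m}\,f(b)^m=\frac{k}{g^m}\,r^m g^m=kr^m .
\end{equation}
This is a positive integer, since $k\ge 1$ and $r\ge 1$. Positivity of $f(b)$ comes from the monotonicity condition \eqref{eq:monotone}, because $b>n_f$. So $a$ is not the smallest positive integer $u$ with $tF(u)\in\Z_{>0}$, and $(a,h)$ is invisible along $F$.

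There is no real obstacle here. The only care needed is that $t$ is forced by $h=tF(a)$, so visibility is a statement about a single rational number. One must also check that $b$ is a legitimate competitor, meaning a positive integer with $F(b)>0$; both facts follow from $b>n_f$ and \eqref{eq:monotone}. The bound $h\le N$ plays no role beyond placing $(a,h)$ in $[1,N]^2$.
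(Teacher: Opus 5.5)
Your proof is correct and matches the paper's argument: $t=h/F(a)$ is forced, and $tF(b)=\bigl(h/s(a,b)^m\bigr)\,r(a,b)^m$ is a positive integer with $b<a$, so $a$ is not minimal. Your extra checks that $b$ is a positive integer and that $f(b)>0$ (via $b>n_f$ and \eqref{eq:monotone}) are details the paper leaves implicit.
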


\begin{proof}
The unique rational number $t$ satisfying $h=tF(a)$ is $t=h/F(a)>0$.
By the definitions of $s(a,b)$ and $r(a,b)$ given in equation \eqref{eq:g-s-r}, we have
$f(a)=s(a,b)\, g(a,b)$ and $f(b)=r(a,b)\, g(a,b)$.
Therefore
\begin{equation}
    tF(b)=h\frac{f(b)^m}{f(a)^m}
    =\frac{h}{s(a,b)^m}r(a,b)^m,
\end{equation}
is a positive integer. Since $b<a$, the integer $a$ is not the smallest positive integer $u$ for which $tF(u)$ is a positive integer.
Thus $(a,h)$ is invisible along $F$.
\end{proof}

\begin{proof}[Proof of Theorem~\ref{thm:lower}]
Let $s_0,r_0$ and $(a_j,b_j)_{j\geq j_0}$ be as in
Proposition~\ref{prop:propagation}, and let $m\geq 1$. By
the sum \eqref{eq:S-rearranged} for $S_F(N)$ and  the lower bound \eqref{eq:Msr-lower} for $M_{s,r}(N)$, we have
\begin{equation}
    S_F(N)\geq\frac{M_{s_0,r_0}(N)}{s_0^m}\gg_F\log N.
\end{equation}
This, together with Theorem~\ref{thm:upper}, gives
$S_F(N)\asymp_F\log N$ when $m\geq 3$.

Let
\begin{equation}
    \mathcal{T}(N)\defeq\{j\geq j_0:a_j\leq N\}.
\end{equation}
By Proposition~\ref{prop:propagation}(iii), there is a constant $C_f>0$
such that $\log a_j\leq C_fj$ for all sufficiently large $j$. Hence all
sufficiently large integers $j\leq C_f^{-1}\log N$ belong to
$\mathcal{T}(N)$, and therefore
\begin{equation}
    \#\mathcal{T}(N)\gg_f\log N.
\end{equation}
For each $j\in\mathcal{T}(N)$,
Lemma~\ref{lem:blocked-heights} shows that every point
\begin{equation}
    (a_j,h),
    \qquad
    h\in\{s_0^m,2s_0^m,3s_0^m,\ldots\}\cap[1,N],
\end{equation}
is invisible along $F$. The first coordinates $a_j$ are distinct, so for
$N\geq 2s_0^m$,
\begin{equation}
    \#\mathrm{Invisible}_F(N)
    \geq
    \#\mathcal{T}(N)\left\lfloor\frac{N}{s_0^m}\right\rfloor
    \gg_F N\log N.
\end{equation}
\end{proof}

\begin{proof}[Proof of Theorem~\ref{thm:visibility}, lower bounds]
The lower bounds follow from Theorem~\ref{thm:lower}.
\end{proof}

\begin{remark}
For $m=2$, Theorems~\ref{thm:upper} and \ref{thm:lower} give
\begin{equation}
    \log N\ll_F S_F(N)\ll_F(\log N)^4
\end{equation}
and
\begin{equation}
    N\log N\ll_F\#\mathrm{Invisible}_F(N)\ll_F N(\log N)^4.
\end{equation}
\end{remark}

\end{document}